\numberwithin{equation}{section}
\definecolor{vg}{rgb}{0.0, 0.40, 0.15}
\newtheorem{thm}{Theorem}[section]
\newtheorem{prop}[thm]{Proposition}
\newtheorem{lemma}[thm]{Lemma}
\newtheorem{remark}[thm]{Remark}
\newcommand{\media}[1]{- \hskip-.9em \int_{#1}}
\def\R{\mathbb{R}}
\def\Z{\mathbb{Z}}
\def\N{\mathbb{N}}
\def\media
 \newcommand{\mres}{\mathbin{\vrule height 1.6ex depth 0pt width
0.13ex\vrule height 0.13ex depth 0pt width 1.3ex}}
\title{Homogenization and 3D-2D dimension reduction of a functional on manifold valued Sobolev spaces}
\author{Michela Eleuteri, Luca Lussardi, Andrea Torricelli and Elvira Zappale}
\date{}
\begin{document}
\maketitle
\noindent 
{\bf Abstract.}
We study simultaneous homogenization and dimensional reduction of integral functionals for maps in manifold-valued Sobolev spaces. Due to the superlinear growth regime, we prove that the density of the $\Gamma$-limit is a tangential quasiconvex integrand represented by a cell formula.  
\smallskip\par
\noindent 
{\bf Keywords.} Homogenization, dimensional reduction, manifold-valued Sobolev spaces, $\Gamma$-convergence, micromagnetics.
\smallskip\par
\noindent 
{\bf Mathematics Subject Classification.} 74Q05, 49J45, 49Q20, 78A99.


\color{black}
\section{Introduction and main results}
The homogenization theory aims at describing the behavior of a model (either
partial differential equations or energy functional) with heterogeneous coefficients
that periodically oscillate on a small scale, say $h$. Indeed the main purpose consists of obtaining macroscopic properties of media with finely periodically distributed micro-structure, rigorously deriving these properties by means of a limiting procedure  as the fine-scale $h$ converges to zero. Many approaches have been developed in the last century: for instance
asymptotic expansion methods (e.g. see A. Bensoussan, J.L. Lions and G. Papanicolaou \cite{BLP78}, E. Sanchez-Palencia \cite{SP80}) or the H-convergence methods due to
F. Murat and L. Tartar \cite{Tar77, Tar09, FMT09} or the two-scale convergence \cite{Ngu89, All92}, more recently re-casted in terms of a fixed functional space, i.e. within the theory of periodic unfolding (see \cite{CDG02, Dam05, Vis06, Vis07, CDGbook}).

From the variational stand-point one 
 is interested in the asymptotic behavior of minimizers
of energy functionals
 depending on this small parameter $h$, and a crucial tool in this framework is $\Gamma$-convergence, see \cite{DMbook}.

Another area of research in elasticity and micromagnetics is the derivation of
lower dimensional theories — such as membrane, plate, string and rod models —
from three-dimensional samples, i.e. one is interested in detecting a reduced model asymptotically departing from a slender body, 
letting the geometry of the body become singular.
Again a rigorous approach is $\Gamma$-convergence and many results have been obtained in this context, after the pioneering papers \cite{ABP91}, \cite{AL01}, \cite{LDR95}, \cite{GJ}, \cite{BZ}, \cite{GH}, \cite{CCG}, among a wider literature,
in the elastic and micromagnetic case respectively.  In this paper we consider the slender domain approaching the reduced one as the heterogeneity becomes finely and finely distributed, i.e. we consider homogenization and dimensional reduction to happen simultaneously. This analysis has been performed in the realm of nonlinear heterogeneous thin structures and composites (\cite{BFF}, \cite{FIOM14}, \cite{KK16}, \cite{BV17}, \cite{FG23}, \cite{CGO24}) or with the two-scale convergence technique and for plates and rods (\cite{Neukamm}). On the other hand, the same procedure has not been taken into account in the constrained setting, suitable to model liquid crystals, magnetostrictive or ferromagnetic materials, etc.
\color{black}

In the current paper we assume that the domain is an inhomogeneous cylinder, whose microstructure is assumed to be distributed with periodicity within the material described by the small parameter $h$ comparable with the height of the domain. The equilibrium configurations are detected as minimizers of an
integral functional of the form 
\[
\int_{\omega\times (-\frac{h}{2},\frac{h}{2})} f\left ( \frac{x}{h}, \nabla u\right ) \, dx \qquad u: \omega\times \left (-\frac{h}{2},\frac{h}{2} \right ) \rightarrow \mathbb{R}^3,
\]
under suitable boundary conditions, where $\omega \subset \mathbb{R}^2$ is a bounded open set, and $f: \mathbb{R}^3 \times \mathbb{R}^{3 \times 3}\rightarrow \mathbb{R}$
is a periodic integrand with respect to the first variable, and $u$ is a manifold-valued Sobolev field, that will be specialized in the sequel. 
\color{black}
Due to the many applications, it is worth to recall that solely the homogenization of integral functionals depending on $x$ and $\nabla u$ and defined on manifold-valued Sobolev fields has been studied by Babadjian and Millot in \cite{BM} for $u\in W^{1,p}$ and in \cite{BMbv} for $u\in W^{1,1}$, we also refer to \cite{P} for other related models.
Analogously the dimensional reduction of micromagnetic and ferromagnetic energy has been studied, in several contexts, we recall \cite{AL01, BZ07, GH, GH2, CCG, DKMO} among the others.

The simultaneous homogenization and dimension reduction of an integral functional defined on real valued Sobolev functions has been studied by Braides, Fonseca and Francfort in \cite{BFF} in the case $p>1$, while the case $p=1$ can be covered by the Global Method \cite{BFM}. 

The main novelty of our contribution is to apply both homogenization and dimension reduction simultaneously and to assume the functional defined on the space of manifold-valued Sobolev functions for $p>1.$ In a subsequent contribution \cite{LTZ} we will deal with the case $p=1$.\\

For the sake of exposition, we focus on the model $3D-2D$ but our analysis could be extended to other dimensions, i.e. to the framework $ND- (N-d)D$.
\noindent
Given a Carathéodory function $f:\R^3 \times\R^{3\times 3}\to \mathbb{R}$ and $1 <p<+\infty$ we consider the functional
\[
\frac{1}{h}\int_{\omega_{,h}} f \left (\frac{x}{h}, \nabla u\right ) \, dx, \qquad u \in W^{1,p}(\omega_{,h}; \mathcal{M}),
\]
with $\omega_{,h} := \omega \times \left (- \frac{h}{2}, \frac{h}{2} \right ),$ $h>0,$  $\omega \subset \mathbb{R}^2$ open and bounded, and $\mathcal{M}$ a smooth submanifold of $\mathbb{R}^3$ without boundary. In particular, we assume that $f$ has the following properties:
\begin{itemize}
    \item [(H1)] $f(\cdot, x_3, \xi)$ is 1-periodic, i.e. for every $(x_{\alpha}, x_3) \in \mathbb{R}^3$ and $\xi \in \mathbb{R}^{3 \times 3}$ it holds
    \begin{equation*}
    \label{H1}
        f(x_{\alpha} + \mathbf{e}_i, x_3, \xi) = f(x_{\alpha}, x_3, \xi), \qquad \forall i = 1,2
    \end{equation*}
    where $\{\mathbf{e}_1, \mathbf{e}_2\}$ is the canonical basis of $\mathbb{R}^2;$

    \item [(H2)] $f$ has $p$-growth: there exists $\alpha, \beta > 0$ and $1 < p < + \infty$ such that
    \[
        \alpha |\xi|^p \le \, f(x, \xi) \le \, \beta (1 + |\xi|^p),
        \]
    for a.e. $x\in\R^3$ and for every $\xi\in\R^{3\times 3}$.
\end{itemize}

We define the functional $\tilde{I}^h: L^p(\omega_{,h}; \mathcal{M}) \rightarrow \overline{\mathbb{R}}$
\[
\tilde{I}^h(u) := \left \{
\begin{array}{lll}
\!\!\!\!\!\! & \displaystyle \frac{1}{h}\int_{\omega_{,h}} f \left (\frac{x}{h}, \nabla u\right ) \, dx \qquad & \textnormal{if $u \in W^{1,p}(\omega_{,h}; \mathcal{M})$}\\[4mm]
\!\!\!\!\!\! & + \infty &\textnormal{elsewhere.}
\end{array}
\right.
\]
The study of the $\Gamma-$limit of $\tilde{I}^h$ is equivalent to the study of the $\Gamma-$limit of the rescaled functional $I^h$ defined as
\begin{equation}
\label{functional}
    I^h(u) := \left \{
\begin{array}{lll}
\!\!\!\!\!\! & \displaystyle \int_{\Omega} f \left (\frac{x_{\alpha}}{h}, x_3, \nabla_h u\right ) \, dx \qquad & \textnormal{if $u \in W^{1,p}(\Omega; \mathcal{M})$}\\[4mm]
\!\!\!\!\!\! & + \infty &\textnormal{elsewhere}
\end{array}
\right.
\end{equation}
with $\Omega := \omega \times \left (- \frac{1}{2}, \frac{1}{2} \right ) = \omega_{,1}$ and $\nabla_h:=[\frac{\partial}{\partial x_1}, \frac{\partial}{\partial x_2}, \frac{1}{h}\frac{\partial}{\partial x_3}]$.
From here onward, we denote $\nabla_\alpha:=[\frac{\partial}{\partial x_1}| \frac{\partial}{\partial x_2}]$ and $\nabla_3:=\frac{\partial}{\partial x_3}$, so that $\nabla_h=[\nabla_\alpha| \frac{1}{h}\nabla_3]$. Moreover, we denote with $\xi_\alpha$ an element of $\R^{3\times 2}$ and with $\xi$ an element of $\R^{3\times 3}.$\\

\noindent
Our main result is the following.

\begin{thm}
\label{casopm1}
    Assume that $\mathcal{M}$ is a connected smooth manifold of $\mathbb{R}^3$ without boundary and let $f:\R^3 \times\R^{3\times 3}\to \mathbb{R}$ be a Carathéodory function satisfying {\rm (H1)} and {\rm (H2)} with $1<p<+\infty$. Then, the $\Gamma-$limit of $I^h$ as $h \rightarrow 0$ with respect to the strong $L^p$-topology is the functional $I: L^p(\omega; \mathcal{M}) \rightarrow \overline{\mathbb{R}}$ given by 
\begin{equation}
\label{candidatepm1}
    I(u) = \left \{
\begin{array}{lll}
\!\!\!\!\!\! & \displaystyle\int_{\omega} T f_{\rm hom}^0(u, \nabla_\alpha u) \, dx_\alpha, & \textnormal{if $u \in W^{1,p}(\omega; \mathcal{M})$}\\[4mm]
\!\!\!\!\!\! & + \infty &\textnormal{elsewhere,}
\end{array}
\right.
\end{equation}
with $T f^0_{\rm hom}: \mathbb{R}^3 \times \mathbb{R}^{3 \times 2} \rightarrow \mathbb{R}$ defined as
\begin{align}
\label{Tfhom}
&Tf^0_{\rm hom}(s,\xi_\alpha) :=  \liminf_{t \rightarrow + \infty} \inf_{\varphi} \Bigg \{\frac{1}{t^2} \int_{(tQ')_{,1}} f(x_{\alpha}, x_3, \xi_\alpha + \nabla_{\alpha} \varphi | \nabla_3 \varphi) \, d x_{\alpha} d x_3: \nonumber\\
& \, \varphi \in W^{1,\infty}((tQ')_{,1}; T_s (\mathcal{M})), \,\, \varphi(x_{\alpha}, x_3) = 0\,\,\, \textnormal{for every $(x_{\alpha}, x_3) \in \partial (tQ') \times \left(-\frac{1}{2},\frac{1}{2}\right)$} \Bigg \},
\end{align}
where $Q':=\left(-\frac{1}{2},\frac{1}{2}\right)^2,$ $(tQ')_{,1}:=tQ'\times \left(-\frac{1}{2},\frac{1}{2}\right), $ and $T_s (\mathcal{M})$ denotes the tangent space to $\mathcal{M}$ in $s$.
\end{thm}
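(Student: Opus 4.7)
The plan is the standard two-sided $\Gamma$-convergence scheme: a compactness result for sequences of equibounded energy, followed by separate proofs of the liminf and limsup inequalities. The novelty with respect to the Euclidean-valued analysis of \cite{BFF} and the pure homogenization on manifolds of \cite{BM} lies in combining simultaneous homogenization/dimension reduction with the manifold constraint.

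\textbf{Compactness.} If $\sup_h I^h(u_h)<+\infty$, then (H2) gives $\|\nabla_\alpha u_h\|_{L^p(\Omega)}+h^{-1}\|\nabla_3 u_h\|_{L^p(\Omega)}\le C$. Up to a subsequence, $u_h\rightharpoonup u$ in $W^{1,p}(\Omega;\R^3)$ with $\nabla_3 u\equiv 0$, so $u$ depends only on $x_\alpha$. The constraint $u_h\in\mathcal{M}$ a.e.\ passes to the strong $L^p$ limit (by closedness of $\mathcal{M}$), hence $u\in W^{1,p}(\omega;\mathcal{M})$.

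\textbf{Liminf.} I would use the Fonseca--M\"uller blow-up method. Consider the Radon measures $\mu_h$ on $\omega$ obtained by integrating $f(x_\alpha/h,x_3,\nabla_h u_h)$ in $x_3$; after extraction $\mu_h\rightharpoonup^{\!*}\mu$, and it suffices to prove $\tfrac{d\mu}{d\mathcal{L}^2}(x_0)\ge Tf^0_{\hom}(u(x_0),\nabla_\alpha u(x_0))$ for $\mathcal{L}^2$-a.e.\ $x_0\in\omega$. At a Lebesgue point with $s_0=u(x_0)$, $\xi_0=\nabla_\alpha u(x_0)$, rescale a cube $x_0+rQ'$ to $tQ'$ by setting $t=r/h$. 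Since the blown-up maps converge to the affine map $y\mapsto s_0+\xi_0\cdot y$ and $\mathcal{M}$ is smooth, post-composition with the differential of the nearest-point projection $\pi:U\to\mathcal{M}$ at $s_0$ produces tangent-valued competitors admissible in \eqref{Tfhom}, with a vanishing correction; a diagonal choice of $r=r_h\to 0$ suitably slowly yields the bound.

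\textbf{Limsup.} By density of $C^\infty(\overline\omega;\mathcal{M})$ in $W^{1,p}(\omega;\mathcal{M})$ (exploiting smoothness and connectedness of $\mathcal{M}$), reduce to $u$ smooth. Cover $\omega$ with small cubes $Q_k=x_0^k+\delta Q'$, set $s_k:=u(x_0^k)$, $\xi_k:=\nabla_\alpha u(x_0^k)$, and pick near-optimal $\varphi_k\in W^{1,\infty}((t_h Q')_{,1};T_{s_k}\mathcal{M})$ for $Tf^0_{\hom}(s_k,\xi_k)$, extended by $t_h$-periodicity in $x_\alpha$. On each slab $Q_k\times(-\tfrac12,\tfrac12)$, define the pre-projection candidate
\[
v_h(x)\;:=\;u(x_\alpha)+h\,\varphi_k\!\left(\tfrac{x_\alpha-x_0^k}{h},\,x_3\right).
\]
Because $h\varphi_k$ is tangent-valued and $O(h)$ in sup-norm, $v_h(x)$ lies in a tubular neighbourhood of $\mathcal{M}$ for $h$ small; the recovery sequence is then $u_h:=\pi\circ v_h$, which is manifold-valued by construction.

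\textbf{Main obstacle.} The delicate point is showing that the nearest-point projection step is energetically negligible: one needs $|I^h(u_h)-I^h(v_h)|\to 0$. The $W^{1,\infty}$ regularity in \eqref{Tfhom} guarantees a uniform Lipschitz bound on $\varphi_k$, and the Taylor expansion $\pi(s_k+h\varphi_k)=s_k+h\varphi_k+O(h^2)$ along tangent directions at $s_k$ shows that $\nabla_h(\pi\circ v_h)=\nabla_h v_h+O(h)$ in $L^\infty$ on each slab. The $p$-growth of $f$ then absorbs this error. Gluing the local constructions on adjacent $Q_k$'s requires a De Giorgi-type cut-off on a thin layer near $\partial Q_k$, adjusted so that the manifold-valued trace matches $u$; a standard diagonal selection in $h\to 0$, $t_h\to+\infty$, $\delta\to 0$ closes the argument.
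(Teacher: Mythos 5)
Your overall architecture (blow-up for the liminf, a projected oscillation $\Pi(u+h\varphi)$ for the limsup) is in the right family, but two of its load-bearing steps have genuine gaps. First, the limsup reduction ``by density of $C^\infty(\overline\omega;\mathcal{M})$ in $W^{1,p}(\omega;\mathcal{M})$'' is not available under the hypotheses of the theorem: for a two-dimensional base $\omega$ and $1<p<2$, smooth $\mathcal{M}$-valued maps are dense in $W^{1,p}(\omega;\mathcal{M})$ only under topological restrictions on $\mathcal{M}$ (essentially $\pi_1(\mathcal{M})=0$, by Bethuel-type results), while the theorem assumes only that $\mathcal{M}$ is connected. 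The paper never uses density: it works with an arbitrary $u\in W^{1,p}(\omega;\mathcal{M})$, proves a localization lemma showing that a suitable upper functional $I^{\{h_k\}}_{\mathcal K}(u,\cdot)$ is (the trace of) a Radon measure absolutely continuous with respect to $\mathcal L^2$, and then bounds its Radon--Nikod\'ym derivative at a.e.\ $x_0$ by $Tf^0_{\rm hom}(u(x_0),\nabla_\alpha u(x_0))$, using the local recovery map $u_k=\Pi\bigl(u(x_\alpha)+h_k\,\zeta(u(x_\alpha)-s_0)\,\varphi(x_\alpha/h_k,x_3)\bigr)$ with a cut-off $\zeta$ \emph{in the target} so that the argument of $\Pi$ stays in a tubular neighbourhood even where $u$ is far from $s_0$. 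Relatedly, your gluing step (``a De Giorgi-type cut-off\dots adjusted so that the manifold-valued trace matches $u$'') names precisely the hard point without resolving it: a Euclidean convex-combination cut-off between two $\mathcal{M}$-valued maps leaves the manifold. The paper resolves it with the manifold-valued interpolation $\Phi(s_0,s_1,t)$ of Dacorogna--Fonseca--Mal\'y--Trivisa, which is only defined for uniformly close arguments; this is exactly why the auxiliary functional is defined with uniformly converging sequences and a compact set $\mathcal K\subset\mathcal M$, later removed by letting $R\to\infty$.

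Second, in the liminf your claim that post-composing the blown-up maps with $\nabla\Pi(s_0)$ ``produces tangent-valued competitors admissible in \eqref{Tfhom} with a vanishing correction'' skips the three technical obstructions that drive the paper's proof: (a) admissible competitors in \eqref{Tfhom} are Lipschitz with zero lateral boundary values, whereas the blow-up sequence is only $W^{1,p}$ with no boundary control, so one needs the equi-integrable Lipschitz decomposition of Bocea--Fonseca (and a boundary adjustment compatible with the constraint); (b) the linearization error of the constraint is controlled only where the scaled gradients are bounded, which forces a truncation to sets $\{|\nabla_{h_k}w_k|\le M\}$ and a Scorza--Dragoni/uniform-continuity argument; (c) the lattice mismatch $y_0/h_k\notin\mathbb Z^2$ requires a translation of the cell. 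The paper sidesteps direct competitor construction altogether: it introduces the penalized unconstrained integrand $\bar f(x,s,\xi)=f(x,\mathbf P_s\xi)+|\xi-\mathbf P_s\xi|^p$, freezes $s=s_0$ after the above reductions, applies the unconstrained $3$D--$2$D homogenization result of Braides--Fonseca--Francfort to get $\bar f^0_{\rm hom}(s_0,\xi_0)$, and concludes via the identity $\bar f^0_{\rm hom}=Tf^0_{\rm hom}$ on tangential pairs (Proposition \ref{characterization}). Without these ingredients (or substitutes for them), your liminf and limsup arguments do not close; the compactness paragraph and the general diagonal scheme are fine but are not where the difficulty lies.
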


We observe that, in view of $(H2)$, the same result could be achieved replacing the strong convergence in $L^p(\omega;\mathcal M)$ by the weak convergence in $W^{1,p}(\omega;\mathcal M)$.

The paper is organized as follows: Section \ref{NP} is devoted to fix notation and to provide preliminary results, while Section \ref{Sobolev} is devoted to the proof of Theorem \ref{casopm1}.

The $\Gamma-$liminf inequality follows by the localization and blow-up methods, while the main difficulty with the $\Gamma-$limsup inequality is due to the fact that we need to construct a manifold-valued recovery sequence. A key tool in our result is the {\it tangential quasiconvexification} introduced in \cite{Dacorogna-Fonseca-Maly-Trivisa}.
Our integral representation result holds in $W^{1,p}(\omega;\mathcal M)$ when $p>1$, but we observe that \eqref{candidatepm1} is still valid, and can be obtained with the same (even easier) techniques, when $p=1$ and $L^p(\omega;\mathcal M)$ strong convergence is replaced by $W^{1,1}(\omega;\mathcal M)$, besides in this latter setting there is a lack of compactness. On the other hand, we emphasize that when $p=1$  and $u\in W^{1,1}(\omega;\mathcal M)$ \eqref{candidatepm1} represents also the $\Gamma-L^1(\omega;\mathcal M)$ strong limit of \eqref{functional}, when $h \to 0$. For this result we refer to \cite{LTZ}.

\section{Notation and preliminaries}\label{NP}
This section is devoted to fix notation, recall previous results that will be useful in the sequel and establish properties of the energy densities appearing in the main result.
\color{black}

In what follows $\Omega:=\omega\times (-\frac{1}{2}, \frac{1}{2})$, $\omega \subset \mathbb{R}^2$ is open and bounded, and $\mathcal{M}$ is a smooth submanifold of $\R^3$ without boundary, further assumptions on $\mathcal{M}$ will be stated explicitly if needed. Given $s\in\mathcal{M}$, we write $T_s(\mathcal{M})$ for the tangent space to $\mathcal{M}$ in $s$.
We denote by $\mathcal{A}(\Omega)$ the family of all open subsets of $\Omega$ and with $\mathcal{A}(\omega)$ the family of all open subset of $\omega$. We write $B^k(s,r)$ for the closed ball in $\R^k$, $k\in\N$, of center $s\in\R^k$ and radius $r>0$. Moreover, we denote by $Q$ the cube $(-\frac{1}{2}, \frac{1}{2})^3$ and with $Q(x_0,\rho)$ the rescaled and translated cube $x_0+\rho Q$, with $x_0\in\R^3$, $\rho>0$. In a similar way, given $\nu \in \mathbb{S}^1,$ $Q_{\nu}$ stands for the open unit square in $\mathbb{R}^2$ centered at the origin, with the two sides orthogonal to $\nu;$ we set $Q_{\nu}(x_0, \rho) := x_0 + \rho Q_{\nu}.$ 
\\
Furthermore, given a set $A\subset\omega$ we denote by $A_{,h}$, with $h>0$, the set $A\times (-\frac{h}{2}, \frac{h}{2})$ and so $A_{,1}:=A\times(-\frac{1}{2}, \frac{1}{2})$. It follows that, in particular, $\Omega=\omega_{,1}$. We also denote by $Q'$ the square $(-\frac{1}{2}, \frac{1}{2})^2$, so $Q'_{,h}=(-\frac{1}{2}, \frac{1}{2})^2\times (-\frac{h}{2}, \frac{h}{2})$ for $h>0$, while $Q'_{,1}=(-\frac{1}{2}, \frac{1}{2})^3=Q$ and $Q_{\nu,1} = Q_{\nu} \times (-\frac{1}{2}, \frac{1}{2}).$ $\mathcal{M}(\Omega)$ is the space of real valued Radon measure in $\Omega$ with finite total variation,  $\mathcal{L}^k$, is the $k$-dimensional Lebesgue measure, with $k\in\N$. Finally, given $\lambda,\mu\in\mathcal{M}(\Omega)$ we denote by $\frac{d\lambda}{d\mu}$ the Radon-Nykod\'ym derivative of $\lambda$ with respect to $\mu$. By a generalization of Besicovitch Differentiation Theorem, see for instance \cite[Proposition 2.2]{Ambrosio-Dal Maso}, there exists a Borel set $E$ such that $\mu(E)=0$ and $\frac{d\lambda}{d\mu}(x)=0$ for every $x\in {\rm supp}\, \lambda \setminus E.$\\

\noindent
Given $s\in\mathcal{M}$, we consider the orthogonal projection 
\[
P_s: \R^3\to T_s(\mathcal{M}),
\]
and we define the function 
$\mathbf{P}_s: \R^{3\times 3}\to [T_s(\mathcal{M})]^3$ as
\[
\mathbf P_s(\xi):=\left[P_s(\xi_1)| P_s(\xi_2)| P_s(\xi_3)\right],
\]
for every $\xi=[\xi_1|\xi_2|\xi_3]\in\R^{3\times 3}.$ 

For a Carathéodory function $f:\mathbb R^3 \times \mathbb R^{3 \times 3}\to \mathbb R$, we set
\begin{equation}
\label{perturbedf}
    \Bar{f}(x,s,\xi):=f(x,\mathbf{P}_s(\xi))+|\xi-\mathbf{P}_s(\xi)|^p.
\end{equation}
This function will play a crucial role in our subsequent analysis, since it will appear in the formulas to detect our limiting energy densities.
 By construction the function $\Bar{f}:\R^3\times\mathcal M\times\R^{3\times 3}\to\R$ is Carathéodory, i.e. it is measurable with respect to the first variable and continuous with respect to the last two variables. Moreover, if conditions (H1) and (H2) for $1< p<+\infty$ are satisfied then $\Bar{f}$ is $1$-periodic in the first variable and satisfies $p$-growth and $p$-coercivity conditions, i.e. there exists $C>0$ such that
 \begin{equation}\label{pgrowth}
 \frac{1}{C}|\xi|^p\le\Bar{f}(x,s,\xi)\le C(1+|\xi|^p),
 \end{equation}
for every $(s,\xi)\in \mathcal M\times\R^{3\times 3}$ and for a.e. $x\in\R^3$.

Following \cite[Proposition 2.1]{BM}, we characterize the density $Tf_{\rm hom}^0$ of the $\Gamma$-limit and we will prove some of its properties.

\begin{prop}
    \label{characterization}
   Let $1\le p < +\infty$,  for every  Carathéodory function $f:\R^3\times\R^{3\times 3}\to\R$ satisfying conditions {\rm (H1)} and {\rm (H2)} the following properties hold.
    \begin{itemize}
        \item[{\rm (i)}] For every $s\in\mathcal{M}$ and $\xi_\alpha\in[T_s(\mathcal{M})]^2$
        \begin{equation}
        \label{(2.3)}
        Tf_{\rm hom}^0(s, \xi_\alpha)=\Bar{f}_{\rm hom}^0(s,\xi_\alpha),
        \end{equation}
        where $Tf^0_{\rm hom}$ is defined as in \eqref{Tfhom} and
        \begin{align}
            &\Bar{f}^0_{\rm hom}(s,\xi_\alpha) =  \lim_{t \rightarrow + \infty} \inf_{\varphi} \Bigg \{\frac{1}{t^2} \int_{(tQ')_{,1}} \Bar f(x_{\alpha}, x_3,s, \xi_\alpha + \nabla_{\alpha} \varphi | \nabla_3 \varphi) \, d x_{\alpha} d x_3: \label{f0hom}\\
& \, \varphi \in W^{1,\infty}((tQ')_{,1}; \R^3), \,\, \varphi(x_{\alpha}, x_3) = 0\,\,\, \textnormal{for every $(x_{\alpha}, x_3) \in \partial (Q') \times \left(-\frac{1}{2},\frac{1}{2}\right)$} \Bigg \} \nonumber
        \end{align}
        
        \item[{\rm (ii)}] The function $Tf_{\rm hom}^0$ is tangentially quasiconvex in the second variable, i.e.
        \[
        Tf_{\rm hom}^0(s,\xi_\alpha)\le\int_{Q'} Tf_{\rm hom}^0(s, \xi_\alpha+ \nabla_\alpha \psi)dx_\alpha,
        \]
        for every $s\in\mathcal{M}, \xi_\alpha\in [T_s(\mathcal{M})]^2$, and $\psi\in W^{1,\infty}_0(Q'; T_s (\mathcal{M}))$. In particular, $Tf_{\rm hom}^0(s, \cdot)$ is rank one convex.
        \item[{\rm (iii)}] $Tf_{\rm hom}^0$ is uniformly $p$-coercive and has uniform $p$-growth in the second variable (i.e. it satisfies inequalities as in \eqref{pgrowth}, uniformly with respect to $s$). Moreover, there exists $C>0$ such that for every $s\in\mathcal{M}$ and $\xi_\alpha, \xi_\alpha'\in [T_s(\mathcal{M})]^2$  
        \begin{equation}
        \label{(2.5)}
        |Tf_{\rm hom}^0(s, \xi_\alpha) - Tf_{\rm hom}^0(s, \xi_\alpha')|\le C|\xi_\alpha-\xi_\alpha'|(1+|\xi_\alpha|^{p-1}+|\xi_\alpha'|^{p-1}).
        \end{equation}

    \end{itemize}
\end{prop}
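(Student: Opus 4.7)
The plan is to follow the scheme of \cite[Proposition 2.1]{BM}, adapting each step to the simultaneous $3D$-$2D$ reduction setting. We work throughout with the modified integrand $\bar f$ defined in \eqref{perturbedf}: since $\mathbf{P}_s$ is the identity on $[T_s(\mathcal{M})]^3$, one has $\bar f(x,s,\xi)=f(x,\xi)$ whenever the columns of $\xi$ lie in $T_s(\mathcal{M})$, while for general $\xi\in\R^{3\times 3}$ the penalty $|\xi-\mathbf{P}_s(\xi)|^p$ is active. The three claims will be proved in the order (i), (iii), (ii), since the continuity estimate in (iii) is used in the discretization step of (ii).

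For (i), I would prove the two inequalities separately. The easy direction $\bar f^0_{\rm hom}\le Tf^0_{\rm hom}$ is immediate: any admissible $\varphi\in W^{1,\infty}((tQ')_{,1};T_s(\mathcal{M}))$ for $Tf^0_{\rm hom}$ is admissible for $\bar f^0_{\rm hom}$, and on such $\varphi$ the two integrands coincide because $\xi_\alpha\in[T_s(\mathcal{M})]^2$ and $\nabla\varphi$ is $T_s(\mathcal{M})$-valued by linearity of $P_s$. For the converse, given any admissible $\varphi$ in $\bar f^0_{\rm hom}$, I would replace it by $P_s\circ\varphi$: since $P_s$ is a bounded linear projection, $P_s\varphi\in W^{1,\infty}((tQ')_{,1};T_s(\mathcal{M}))$, it still vanishes on $\partial(tQ')\times(-\tfrac12,\tfrac12)$, and by linearity
\[
\bigl(\xi_\alpha+\nabla_\alpha(P_s\varphi)\,\big|\,\nabla_3(P_s\varphi)\bigr)=\mathbf{P}_s\bigl(\xi_\alpha+\nabla_\alpha\varphi\,\big|\,\nabla_3\varphi\bigr),
\]
so by the definition of $\bar f$ one gets $f(x,\mathbf{P}_s(\dots))\le\bar f(x,s,\xi_\alpha+\nabla_\alpha\varphi|\nabla_3\varphi)$. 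Passing to the inf and then to the liminf yields $Tf^0_{\rm hom}\le\bar f^0_{\rm hom}$. The fact that the $\liminf$ in \eqref{Tfhom} is actually a limit then follows from the corresponding statement for $\bar f^0_{\rm hom}$, which is obtained by the standard subadditive set function argument of \cite{BFF} applied to $\bar f(\cdot,s,\cdot)$ using \eqref{pgrowth}.

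For (iii), the $p$-growth bound on $Tf^0_{\rm hom}$ is obtained by plugging $\varphi\equiv 0$ as a test map, which is admissible since $0\in T_s(\mathcal{M})$, and using the upper bound in (H2). The $p$-coercivity is obtained from the lower bound $\alpha|\xi|^p\le f(x,\xi)$ together with Jensen's inequality applied slice-wise in $x_3$: for each $x_3$, $\nabla_\alpha\varphi(\cdot,x_3)$ has zero mean on $tQ'$ because $\varphi$ vanishes on $\partial(tQ')\times(-\tfrac12,\tfrac12)$, hence $\int_{tQ'}|\xi_\alpha+\nabla_\alpha\varphi|^p\,dx_\alpha\ge t^2|\xi_\alpha|^p$. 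The Lipschitz-type estimate \eqref{(2.5)} follows by the standard comparison of near-minimizers for two nearby matrices: given an $\varepsilon$-minimizer $\varphi$ for $Tf^0_{\rm hom}(s,\xi_\alpha)$, use it as a competitor for $Tf^0_{\rm hom}(s,\xi_\alpha')$ and estimate the difference with the pointwise inequality $||a|^p-|b|^p|\le c|a-b|(|a|^{p-1}+|b|^{p-1})$ applied to $a=\xi_\alpha+\nabla_\alpha\varphi$ and $b=\xi_\alpha'+\nabla_\alpha\varphi$; the $p$-growth \eqref{pgrowth} together with coercivity absorbs the resulting terms.

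For (ii), tangential quasiconvexity is the expected quasiconvex envelope property of a homogenization formula and follows the classical Braides--Müller--Dacorogna scheme (as used also in \cite{BFF}): for given $\psi\in W^{1,\infty}_0(Q';T_s(\mathcal{M}))$, extend $\psi$ by $Q'$-periodicity, tile $tQ'$ by small cubes of side $1/n$, freeze the value of $\xi_\alpha+\nabla_\alpha\psi(y)$ on each cube, and glue a near-optimizer of the cell problem at that value. The bookkeeping uses \eqref{(2.5)} to control the oscillation of $Tf^0_{\rm hom}(s,\cdot)$ across the discretization and a cut-off construction (linear in $x_\alpha$, independent of $x_3$) near the boundaries of the small cubes to restore the zero boundary trace. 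Rank-one convexity is then a classical consequence of quasiconvexity under $p$-growth. The main technical point I anticipate here is the cut-off/gluing: one must ensure the cut-off of $T_s(\mathcal{M})$-valued maps remains $T_s(\mathcal{M})$-valued, which is immediate because $T_s(\mathcal{M})$ is a linear subspace (in sharp contrast with the manifold-valued recovery sequence construction required in Section \ref{Sobolev}), and that the boundary layer where the cut-off is active has asymptotically negligible contribution to the energy, using \eqref{pgrowth}. The only genuinely new ingredient compared to \cite{BM} is the split $\nabla=[\nabla_\alpha|\nabla_3]$, which affects only how the third-column energy is read off but not the combinatorics of the cube tiling.
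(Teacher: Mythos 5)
Your proof of (i) is correct and is essentially the paper's argument: the easy inequality by testing $\bar f^0_{\rm hom}$ with tangent-valued competitors (on which $\bar f=f$), the converse by projecting an arbitrary competitor via $\tilde\psi=P_s(\psi)$ so that $\nabla\tilde\psi=\mathbf P_s(\nabla\psi)$, and the existence of the limit for $\bar f_t$ by the standard subadditivity/covering argument for the unconstrained density.

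The genuine gap is in your proof of the Lipschitz-type estimate \eqref{(2.5)} in (iii), and it propagates to your plan for (ii). You propose to compare near-minimizers directly, estimating $|f(y,\xi_\alpha+\nabla_\alpha\varphi|\nabla_3\varphi)-f(y,\xi'_\alpha+\nabla_\alpha\varphi|\nabla_3\varphi)|$ by the pointwise inequality $\bigl||a|^p-|b|^p\bigr|\le c\,|a-b|\,(|a|^{p-1}+|b|^{p-1})$. That inequality concerns the function $|\cdot|^p$, not $f$: the hypotheses (H1)--(H2) make $f$ only Carath\'eodory with $p$-growth, with no Lipschitz continuity or quantitative modulus of continuity in $\xi$, so the difference of the two integrands cannot be controlled this way, and no constant $C$ independent of $f$'s oscillations comes out. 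This is exactly why the paper takes the opposite logical order: it first gets (ii) essentially for free, by invoking \cite[Theorem 4.2]{BFF} for the quasiconvexity of the unconstrained homogenized density $\bar f^0_{\rm hom}(s,\cdot)$ and transferring it to $Tf^0_{\rm hom}$ through the identification \eqref{(2.3)} (tangent-valued test maps keep $\xi_\alpha+\nabla_\alpha\psi$ tangential, so the two densities agree pointwise under the integral); rank-one convexity of $Tf^0_{\rm hom}(s,\cdot)$ is likewise inherited from $\bar f^0_{\rm hom}(s,\cdot)$ via \eqref{(2.3)}. Then \eqref{(2.5)} follows from rank-one convexity together with the uniform $p$-growth and $p$-coercivity by \cite[Proposition 2.32]{D} --- the Lipschitz estimate is a property of the homogenized (rank-one convex) density, not something you can extract at the level of the cell formula for a raw Carath\'eodory $f$. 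Since your tiling/gluing proof of (ii) explicitly uses \eqref{(2.5)} to control the discretization, it inherits this gap (besides re-proving, with considerable extra bookkeeping, what the citation of \cite{BFF} gives directly). The remaining parts of your (iii) --- testing with $\varphi\equiv0$ for the growth bound and the slice-wise zero-average/Jensen argument for coercivity --- are fine and consistent with what the paper leaves to the reader.
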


Before proving our statement, it is worth observing that for every $s \in \mathcal M$, $\Bar{f}^0_{\rm hom}(s,\cdot)$ in \eqref{f0hom}, is the $3D$-$2D$ homogenized energy density appearing in the dimension reduction problems in the unconstrained setting and it has been introduced in \cite{BFF}.
\color{black}
\begin{proof}
We start from {\rm (i)}. Fix $s \in \mathcal{M}$ and $\xi_\alpha \in [T_s(\mathcal{M})]^2.$ For any $t > 0,$ we introduce
\[
\begin{aligned}
&T f_t (s, \xi_\alpha) := \inf_{\varphi} \left \{\media_{(tQ')_{,1}} f(y, \xi_\alpha + \nabla_\alpha \varphi|\nabla_3\varphi) d y:\right.\\
&\left.\varphi \in W^{1,\infty}((tQ)_{,1};T_s(\mathcal{M})),\,\varphi(x_{\alpha}, x_3) = 0\,\,\, \textnormal{for every $(x_{\alpha}, x_3) \in \partial (tQ') \times \left(-\frac{1}{2},\frac{1}{2}\right)$}\right \},
\end{aligned}
\]
and 
\begin{align}
&\bar{f}_t (s, \xi_\alpha) := \inf_{\varphi} \left \{ \media_{(tQ')_{,1}} \bar{f}(y, s,\xi_\alpha + \nabla_\alpha \varphi|\nabla_3 \varphi) d y:\right.\label{ftbardef}\\
&\left.\,\, \varphi \in W^{1,\infty}((tQ')_{,1}; \R^3),\,\varphi(x_{\alpha}, x_3) = 0\,\,\,\textnormal{for every $(x_{\alpha}, x_3) \in \partial (tQ') \times \left(-\frac{1}{2},\frac{1}{2}\right)$}\right \}.\nonumber
\end{align}

Therefore, from \eqref{Tfhom} and \eqref{f0hom}, we have that $Tf_{\rm hom}^0(s,\xi_\alpha)=\liminf_{t\to \infty} Tf_t(s,\xi_\alpha)$ and $\bar f_{\rm hom}^0(s,\xi_\alpha)=\lim_{t\to \infty}\bar{f}_t(s,\xi_\alpha)$
 provided the latter one exists.

Following a classical argument (see for instance \cite[Proposition 14.4]{BDF} and \cite[Proposition 4.1]{CRZ11}) which, for every $\tau>t>0$ allows to cover $\tau Q'$ by non overlapping squares of side length $t$ and construct suitable test functions $\varphi_\tau$  for the definition of $\overline f_\tau(s,\xi_\alpha)$ in \eqref{ftbardef},  the existence of the following limit
\[
\lim_{t \rightarrow + \infty} \bar{f}_t(s, \xi_\alpha) \qquad \textnormal{for every $s \in \mathcal{M}$ and $\xi_\alpha \in [T_s(\mathcal{M})]^2$},
\]
is granted.
Therefore, in order to conclude the proof, it is enough to show that $T f_t (s, \xi_\alpha) = \bar{f}_t(s, \xi_\alpha)$ for every $t > 0$. For any $\varphi \in W^{1,\infty}((tQ)_{,1};T_s(\mathcal{M}))$ with $\varphi(x_{\alpha}, x_3) = 0$ for every $(x_{\alpha}, x_3) \in \partial (tQ') \times \left(-\frac{1}{2},\frac{1}{2}\right)$ we have
\[
\bar{f}_t(s, \xi_\alpha) \le \, \media_{(tQ')_{,1}} \bar{f}(y, s, \xi_\alpha + \nabla_\alpha \varphi|\nabla_3 \varphi) \, dy = \media_{(tQ')_{,1}} f(y,\xi_\alpha + \nabla_\alpha \varphi|\nabla_3 \varphi) \, dy,
\]
since $(\xi_\alpha + \nabla_\alpha \varphi|\nabla_3 \varphi) \in [T_s(\mathcal{M})]^3$ for a.e. $y \in (tQ')_{,1}.$ By taking the infimum over $\varphi$ in the right hand side of the previous inequality, we obtain that
\[
\bar{f}_t (s, \xi_\alpha) \le \, T f_t(s, \xi_\alpha).
\]
With the aim to prove the converse inequality, consider $\psi \in W^{1,\infty} ((tQ')_{,1}; \mathbb{R}^3))$ such that $\psi(x_{\alpha}, x_3) = 0$ for every $(x_{\alpha}, x_3) \in \partial (tQ') \times \left(-\frac{1}{2},\frac{1}{2}\right)$, and set 
\[
\tilde{\psi} = P_s(\psi).
\]
It is not difficult to see that
\[
\tilde{\psi} \in W^{1, \infty}((tQ')_{,1}; T_s(\mathcal{M})) \qquad \textnormal{and} \qquad \nabla \tilde{\psi} = \mathbf{P}_s(\nabla \psi) \,\, \textnormal{a.e. in $(tQ')_{,1},$}
\]
with $\tilde \psi(x_{\alpha}, x_3) = 0$ for every $(x_{\alpha}, x_3) \in \partial (tQ') \times \left(-\frac{1}{2},\frac{1}{2}\right)$. Thus we get
\[
\begin{aligned}
T f_t(s,\xi_\alpha) &\le \, \media_{(tQ')_{,1}} f(y, \xi_\alpha + \nabla_\alpha \tilde{\psi}|\nabla_3\tilde{\psi}) \, dy\\
&= \media_{(tQ')_{,1}} f(y,\mathbf{P}_s(\xi_\alpha + \nabla_\alpha \psi|\nabla_3 \psi)) \, dy\\
&\le \, \media_{(tQ')_{,1}} \bar{f}(y, s,\xi_\alpha + \nabla_\alpha \psi|\nabla_3 \psi) \, dy.
\end{aligned}
\]
Then, the thesis follows by taking the infimum over $\psi$ in the right hand side of the inequality.
\\
In order to prove {\rm (ii)}, we observe that the classical results (see for instance \cite[Theorem 4.2]{BFF}) yield that $\bar{f}^0_{\rm hom}(s, \cdot)$ is a quasiconvex function for every $s \in \mathcal{M};$ consequently, for any $s \in \mathcal{M}, \xi_\alpha \in [T_s(\mathcal{M})]^2$ and $\varphi \in W^{1, \infty}_0(Q', T_s(\mathcal{M})),$ it holds
\[
T f^0_{\rm hom} (s, \xi_\alpha) = \bar{f}^0_{\rm hom}(s, \xi_\alpha) \le \, \int_{Q'} \bar{f}^0_{\rm hom}(s, \xi_\alpha + \nabla_\alpha \varphi) \, dy_\alpha = \int_{Q'} T f^0_{\rm hom} (s, \xi_\alpha + \nabla_\alpha \varphi) dy_\alpha.
\]
This allows us to conclude that for every $s \in \mathbb R^3$,  $T f^0_{\rm hom}(s,\cdot)$ is tangentially quasiconvex. 
For the attainment of {\rm (iii)}, we observe that $T f^0_{\textnormal{hom}}(s,\cdot)$ is also rank one convex as long as \eqref{(2.3)} holds since  by {\rm (ii)} $\bar{f}^0_{\rm hom}(s, \cdot)$ is rank one convex.
\color{black}
\\
In view of assumption (H2) and the definition of $T f^0_{\textnormal{hom}}$ it is possible to show that $T f^0_{\rm hom}$ is $p$-coercive and it has uniform $p$-growth in the second variable, uniformly with respect to the first. \color{black}
Since rank one convex functions satisfying uniform $p$-growth and $p$-coercivity conditions are $p$-Lipschitz in view, for instance of \cite[Proposition 2.32]{D}, also \eqref{(2.5)} holds, which concludes the proof of (iii).
\end{proof}

\section{Proof of Theorem \ref{casopm1}}
\label{Sobolev}

We recall that $f$ satisfies (H1) and (H2). We also recall that the candidate for the $\Gamma$-limit with respect to the strong $L^p$-topology of the family of functionals $I^h$ in \eqref{functional}, whose localization, for every $p\geq 1$ is defined in $\mathcal A(\omega)$  as  
\[
I^h(u, A) := \left \{
\begin{array}{lll}
\!\!\!\!\!\! & \displaystyle \int_{A_{,1}} f \left (\frac{x_{\alpha}}{h}, x_3, \nabla_h u\right ) \, dx \qquad & \textnormal{if $u \in W^{1,p}(\Omega, \mathcal{M})$}\\[4mm]
\!\!\!\!\!\! & + \infty &\textnormal{elsewhere}
\end{array}
\right. 
\] is the functional $I$ given by \eqref{candidatepm1}.\\

\noindent
For any $A\in\mathcal{A}(\omega)$ and $u\in L^p(\Omega; \mathcal{M})$, consider the functional
\begin{equation}
\label{Io}
    I^0(u,A):=\inf\left\{\liminf_{n\to+\infty}I^{h_n}(u_n,A) : u_n \to u\,\text{ in $L^p(A_{,1};\mathcal M)$}\right\},
\end{equation}
for any  $u\in W^{1,p}(\omega; \mathcal{M})$ and $A\in\mathcal{A}(\omega)$, and  denote $I^0(u, \omega)$ simply by $I^0(u)$ for every $u\in W^{1,p}(\omega; \mathcal{M})$.\\

\noindent
In order to prove the $\Gamma$- limsup inequality we introduce a suitable functional that is larger than $I^0$ and we prove that it is the restriction to $\mathcal{A}(\omega)$ of a Radon measure absolutely continuous with respect to $\mathcal{L}^{2}$.
Given a compact set $\mathcal{K}\subset\mathcal{M}$, and a non-relabeled subsequence $(h_{n_k})_k:=(h_k)_k$ we define for $u\in W^{1,p}(\omega; \mathcal{M})$ and $A \in \mathcal{A}(\omega)$

 \begin{align*}
     I^{\{h_k\}}_\mathcal{K}(u, A)&:=\inf_{(u_k)_k}\Bigg\{ \limsup_{k\to \infty} I^{h_k}(u_k, A)\,:\, \nabla u_k\rightharpoonup \nabla_\alpha u \text{ in } L^{p}(\Omega; \R^3),\\
     &(\nabla_{h_k}u_k)_k \text{ is bounded in } L^p(\Omega,\R^3),\nonumber\\
     &u_k \to u \textnormal{ uniformly, } u_k(x)=u(x_\alpha) \text{ if } {\rm dist}(u(x_\alpha), \mathcal{K})>1 \text{ for a.e. }x\in\Omega
     \Bigg\}. \nonumber
 \end{align*}

\begin{remark}
\label{abuso}
    Given $A\subset\mathcal{A}(\omega)$, the set
\[
V(A) := \left \{v \in W^{1,p}(A_{,1}; \mathcal{M}): \frac{\partial v}{\partial x_3} = 0 \,\,\, \text{a.e. on } A_{,1} \right \},
\]
is isomorphic to the Sobolev space $W^{1,p}(A; \mathcal{M}).$ 
\end{remark}


\begin{lemma}[Localization]
\label{preupperbound}Let $p\ge1$.
    For every $u\in V(\omega),$ there exists a non-relabeled subsequence $(h_k)_k$ such that the set function $ I^{\{h_k\}}_\mathcal{K}(u, \cdot)$ is the restriction to $\mathcal{A}(\omega)$ of a Radon measure absolutely continuous with respect to $\mathcal{L}^{2}$.
\end{lemma}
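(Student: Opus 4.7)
\emph{Strategy.} My plan is to apply the De Giorgi-Letta criterion in order to recognize the set function $I^{\{h_k\}}_{\mathcal K}(u,\cdot)$ as the restriction to $\mathcal A(\omega)$ of a Borel (hence Radon) measure on $\omega$. First, via a diagonal argument over a countable collection of well-chosen open subsets of $\omega$ (for instance, finite unions of open squares with rational vertices whose closure is contained in $\omega$), I would extract from $(h_n)_n$ a subsequence $(h_k)_k$ along which the $\limsup$ in the definition of $I^{\{h_k\}}_{\mathcal K}(u,A)$ is realized as a $\lim$ simultaneously on that family. Writing for brevity $\mu(\cdot):=I^{\{h_k\}}_{\mathcal K}(u,\cdot)$, the set function $\mu$ is then verified to satisfy monotonicity, the a priori bound
\[
\mu(A)\le C\int_{A}(1+|\nabla_\alpha u|^{p})\,dx_\alpha,
\]
superadditivity on disjoint open sets, subadditivity, and inner regularity. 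De Giorgi-Letta then yields the measure, and the displayed bound gives absolute continuity with respect to $\mathcal L^{2}$.

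\emph{Easy properties.} Monotonicity is immediate because $f\ge 0$. The upper bound is obtained by inserting the constant admissible sequence $u_k\equiv u$: by Remark~\ref{abuso}, $u\in V(\omega)$ may be identified with an element of $W^{1,p}(\Omega;\mathcal M)$ independent of $x_3$, so $\nabla_{h_k}u=(\nabla_\alpha u\,|\,0)$ is uniformly $L^{p}$-bounded, $u_k\to u$ is trivial, and (H2) gives the claim. Superadditivity on two disjoint open sets $A,B$ uses $I^{h_k}(u_k,A\sqcup B)=I^{h_k}(u_k,A)+I^{h_k}(u_k,B)$ together with a further sub-subsequence extraction to convert the $\limsup$ of the sum into a sum of limits along a common sub-subsequence; the sequence produced is extended to the full $(h_k)_k$ by the trivial choice $u$ on the remaining indices, which does not raise the $\limsup$ in view of the upper bound just proven.

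\emph{Main obstacle: subadditivity.} Given $A,A',B\in\mathcal A(\omega)$ with $A'\subset\subset A$, I would pick quasi-optimal admissible sequences $(v_k^{1})$ for $\mu(A)$ and $(v_k^{2})$ for $\mu(B)$. Both sequences converge to $u$ uniformly on $\Omega$, so for $k$ large their images lie in a fixed tubular neighborhood $\mathcal U$ of the compact set $u(\overline{\omega})\cup\mathcal K\subset\mathcal M$; on this neighborhood I fix once and for all a smooth nearest-point retraction $\Pi\colon\mathcal U\to\mathcal M$, available since $\mathcal M$ is smooth and without boundary. Partition the ``transition layer'' $A\setminus\overline{A'}$ into $N$ disjoint shells $S_1,\dots,S_N$ of $x_\alpha$-thickness of order $1/N$, and for each $i$ choose a cut-off $\phi_i=\phi_i(x_\alpha)\in C^\infty_c(A)$ with $\phi_i\equiv 1$ on $A'$, $\mathrm{supp}(\nabla_\alpha\phi_i)\subset S_i$, and $|\nabla_\alpha\phi_i|\le CN$. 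Set
\[
w_k^{i}:=\phi_i v_k^{1}+(1-\phi_i)v_k^{2},\qquad \tilde v_k^{i}:=\Pi\circ w_k^{i},
\]
which is manifold-valued for $k$ large. The chain rule together with (H2) yields
\begin{align*}
I^{h_k}(\tilde v_k^{i},A\cup B)
&\le I^{h_k}(v_k^{1},A)+I^{h_k}(v_k^{2},B\setminus\overline{A'})\\
&\quad+C\int_{S_i\times(-\frac12,\frac12)}\bigl(1+|\nabla_{h_k}v_k^{1}|^{p}+|\nabla_{h_k}v_k^{2}|^{p}\bigr)dx\\
&\quad+CN^{p}\|v_k^{1}-v_k^{2}\|_{\infty}^{p}|A|.
\end{align*}
Averaging over $i\in\{1,\dots,N\}$, exploiting the uniform $L^{p}$-bound on $(\nabla_{h_k}v_k^{j})_k$, picking by pigeonhole an index $i$ for which the third addend is $\le C/N$, then letting $k\to\infty$ (the last term vanishes by uniform convergence) and finally $N\to\infty$, I would deduce
\[
\mu(A\cup B)\le\mu(A)+\mu(B),
\]
in the nested form required by De Giorgi-Letta. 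Inner regularity follows from subadditivity and the upper bound in a standard way, and De Giorgi-Letta concludes. The genuine difficulty lies in making $\Pi\circ w_k^{i}$ simultaneously admissible in the sense of $I^{\{h_k\}}_{\mathcal K}$ while controlling the chain-rule error; this is exactly what the uniform closeness of $v_k^{j}$ to $u$ and the tubular-neighborhood retraction $\Pi$ deliver.
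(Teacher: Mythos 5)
There is a genuine gap at the structural level: your plan rests on the De Giorgi--Letta criterion, which requires \emph{superadditivity} of $I^{\{h_k\}}_{\mathcal K}(u,\cdot)$ on disjoint open sets, and this is exactly the property that an $\inf\text{-}\limsup$ functional along a \emph{fixed} subsequence does not obviously have. Your fix --- extract a sub-subsequence along which the relevant $\limsup$'s become limits, then extend the competitor to the full index set ``by the trivial choice $u$ on the remaining indices, which does not raise the $\limsup$'' --- does not work: on the complementary indices the energy of the constant-in-$k$ competitor $u$ is only bounded by $C\int_A(1+|\nabla_\alpha u|^p)\,dx_\alpha$, which in general exceeds $\mu(A)$, so padding with $u$ can strictly raise the $\limsup$ over the full sequence; and passing to a sub-subsequence is not allowed either, since it changes the functional $I^{\{h_k\}}_{\mathcal K}$ itself. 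The same objection applies to your opening diagonalization ``so that the $\limsup$ is realized as a $\lim$'': the $\limsup$ sits inside an infimum over competitor sequences, so there is no single sequence of numbers to stabilize by refining $(h_k)$. The paper avoids this issue altogether: it proves only monotonicity, the $p$-growth bound and the nested subadditivity $I^{\{h_k\}}_{\mathcal K}(u,A)\le I^{\{h_k\}}_{\mathcal K}(u,B)+I^{\{h_k\}}_{\mathcal K}(u,A\setminus\overline C)$, then (Step~2) chooses the subsequence so that the energy densities $\bigl(\int_{(-1/2,1/2)}f(\cdot/h_k,x_3,\nabla_{h_k}u_k)\,dx_3\bigr)\mathcal L^2$ of an almost-optimal sequence for the \emph{whole} $\omega$ converge weakly-$*$ to a Radon measure $\mu$, and identifies $I^{\{h_k\}}_{\mathcal K}(u,A)=\mu(A)$ on every $A\in\mathcal A(\omega)$ by a two-sided comparison using only subadditivity, the growth estimate and inner regularity of $\mu$. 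This is where the ``non-relabeled subsequence'' in the statement comes from, and it is the step your proposal is missing.

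Your gluing construction for subadditivity is, by contrast, close in spirit to the paper's: where the paper interpolates on the shells $S_i$ through the manifold-valued map $\Phi$ of Dacorogna--Fonseca--Mal\'y--Trivisa, you take a Euclidean convex combination followed by the nearest-point projection $\Pi$, which plays the same role and yields the same shell/pigeonhole estimate. Two caveats: the tubular neighborhood must be taken around a \emph{compact} portion of $\mathcal M$ --- $u(\overline\omega)$ need not be bounded for $u\in W^{1,p}(\omega;\mathcal M)$, but the admissibility constraint $u_k=u$ on $\{{\rm dist}(u,\mathcal K)>1\}$ confines the projection to the compact set $\mathcal K'=\{{\rm dist}(\cdot,\mathcal K)\le1\}$, where a uniform projection radius exists; and you must check that the glued sequence satisfies \emph{all} the admissibility constraints of $I^{\{h_k\}}_{\mathcal K}$ (uniform convergence, boundedness of $(\nabla_{h_k}\cdot)_k$, equality with $u$ far from $\mathcal K$), as the paper does via \eqref{(3.4)}--\eqref{(3.5)}. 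These points are repairable; the superadditivity/De Giorgi--Letta step is not, and the argument needs to be rerouted through the weak-$*$ limit measure as in the paper.
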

 \begin{proof}
From the $p$-growth condition (H2), we obtain that, for any non-relabeled subsequence $(h_k)_k,$
\begin{equation}
\label{pgrowthcons}
 I^{\{h_k\}}_\mathcal{K}(u, A) \le \, c \int_A (1 + |\nabla_\alpha u|^p) \, dx; 
\end{equation}
therefore we just need to infer the existence of a suitable subsequence $(h_k)_k$ for which the trace of $ I^{\{h_k\}}_\mathcal{K}(u, \cdot)$ is a Radon measure. This can be shown in two steps.
\\
\\
{\sc step 1:} The first thing to be proved is that the following subadditivity property for the functional  $I^{\{h_k\}}$ 
\begin{equation}
\label{subadditivity}
 I^{\{h_k\}}_\mathcal{K}(u, A) \le \,  I^{\{h_k\}}_\mathcal{K}(u, B) +  I^{\{h_k\}}_\mathcal{K}(u, A \setminus \overline{C}),
\end{equation}
holds for all $A, B, C \in \mathcal{A}(\omega)$ such that $\overline{C} \subset B \subset A.$ Now, for a given $\eta > 0,$ there exist sequences $(u_k)_k, (v_k)_k \subset W^{1,p}(\Omega; \mathcal{M})$ such that $\nabla u_k$ and $\nabla v_k$ converge weakly to $\nabla_\alpha u$ in $W^{1,p}(\Omega; \mathbb{R}^3) $, and $(\nabla_{h_k} u_k)_k$ and $(\nabla_{h_k} v_k)_k$ are bounded in $L^p(\Omega,\R^3)$, $u_k(x) = v_k(x) = u(x_\alpha)$ if dist$(u(x_\alpha), \mathcal{K}) > 1$ for a.e. $x \in \Omega,$ $u_k$ and $v_k$ are uniformly converging to $u$ and
\[
\left \{
\begin{array}{lll}
 \!\!\! & \displaystyle \limsup_{k \rightarrow + \infty}  I^{h_n}(u_k, B) \le \,  I^{\{h_n\}}_\mathcal{K}(u, B) + \eta\\[2mm]
 \!\!\! & \displaystyle \limsup_{k \rightarrow + \infty}  I^{h_n}(v_k, A \setminus \overline{C}) \le \,  I^{\{h_n\}}_\mathcal{K}(u, A \setminus \overline{C}) + \eta.
 \end{array}
\right.
\]
Now, let us set
\[
\mathcal{K}' := \{s \in \mathcal{M}: {\rm dist} (s, \mathcal{K}) \le \, 1\}.
\]
We observe that $\mathcal{K}'$ is a compact subset of $\mathcal{M}$ and $u_k(x) = v_k(x) = u(x_\alpha)$ if $u(x_\alpha) \notin \mathcal{K}'$ for a.e. $x \in \Omega.$ Moreover, let us fix $L := {\rm dist} (C, \partial B),$ $M \in \mathbb{N}$ and define, for every $i \in \{0, \dots, M\}$
\[
B_i := \left \{
x_\alpha \in B: {\rm dist} (x_\alpha, \partial B) > \frac{i L}{M}
\right \}.
\]
while for every $i \in \{0, \dots, M-1\}$
set 
\[
S_i := B_i \setminus \overline{B_{i+1}}.
\]
Consider finally, for every $i \in \{0, \dots, M-1\}$, $\zeta_i \in \mathcal{C}_c^{\infty}(\Omega; [0,1])$ being a cut-off function satisfying 
\[
\zeta_i(x)=\zeta_i(x_\alpha) = \left \{
\begin{array}{lll}
\!\!\! & 1 \qquad & \textnormal{in $(B_{i + 1})_{,1}$}\\[3mm]
\!\!\! & 0 \qquad & \textnormal{in $\Omega \setminus (B_{i})_{,1}$} \qquad \textnormal{and} \qquad |\nabla \zeta_i|= |\nabla_\alpha \zeta_i|\le \, \frac{2 M}{L}
\end{array}
\right.
\]
By \cite[Lemma 3.2 and Remark 3.3]{Dacorogna-Fonseca-Maly-Trivisa}, there exist $\delta > 0,$ $c > 0$ and a uniformly continuously differentiable mapping $\Phi: D_{\delta} \times [0,1] \rightarrow \mathcal{M},$ where
\[
D_{\delta} := \{(s_0, s_1) \in \mathcal{M} \times \mathcal{M}: {\rm dist}(s_0, \mathcal{K}') < \delta, \,\,\, {\rm dist}(s_1, \mathcal{K}') < \delta, \,\,\, |s_0 - s_1| < \delta\},
\]
such that 
\begin{equation}
\label{(3.4)}
\Phi(s_0, s_1, 0) = s_0, \qquad \Phi(s_0, s_1, 1) = s_1, \qquad \frac{\partial \Phi}{\partial t} (s_0, s_1, t) \le \, c \, |s_0 - s_1|,
\end{equation}
and
\begin{equation}
\label{(3.5)}
|\Phi(s_0, s_1, t) - s_0| \le \, c |s_0 - s_1|.
\end{equation}
We recall that $(u_k)_k$ and $(v_k)_k$ are uniformly converging sequences, thus we can choose $k$ large enough such that the following conditions hold
\[
\|u_k - u\|_{L^{\infty}(\Omega; \mathbb{R}^3)} < \delta, \qquad \|v_k - u\|_{L^{\infty}(\Omega; \mathbb{R}^3)} < \delta.
\]
This entails that, for a.e. $x \in \Omega,$ ${\rm dist}(v_k(x), \mathcal{K}') < \delta$ when $u(x_\alpha) \in \mathcal{K}'.$ This way, we can define $w_{k, i} \in W^{1,p}(\Omega; \mathcal{M})$ as follows
\[
w_{k, i}(x) := \left \{
\begin{array}{lll}
\!\!\! & \Phi(v_k(x), u_k(x), \zeta_i(x)) \qquad & \textnormal{if $u(x) \in \mathcal{K}'$}\\[3mm]
\!\!\! & u(x_\alpha) \qquad & \textnormal{if $u(x) \notin \mathcal{K}'$}
\end{array}
\right.
\]
By exploiting the $p$-growth condition (H2) as well as \eqref{(3.4)}, we are able to deduce
\[
\begin{aligned}
\int_{A_{,1}} &f \left (\frac{x_\alpha}{h_k}, x_3, \nabla_{h_k} w_{k,i} \right ) \, dx \\
&\le \int_{B_{,1}} f \left (\frac{x_\alpha}{h_k}, x_3, \nabla_{h_k} u_k \right ) \, dx + \int_{A_{,1} \setminus \overline{C}_{,1}} f \left (\frac{x_\alpha}{h_k}, x_3, \nabla_{h_k} v_k \right ) \, dx \\
& + C_0 \int_{(S_i)_{,1}} (1 + |\nabla_{h_k} u_k|^p + |\nabla_{h_k} v_k|^p + M^p |u_k - v_k|^p) \, dx,
\end{aligned}
\]
which holds for some constants $C_0 > 0$ independent of $i, k$ and $M.$ Now, if we sum up over the index $i \in \{0, \dots, M-1\}$ and we divide by $M,$ we get
\[
\begin{aligned}
\frac{1}{M} &\sum_{i = 0}^{M - 1} \int_{A_{,1}} f \left (\frac{x_\alpha}{h_k}, x_3, \nabla_{h_k} w_{k,i} \right ) \, dx\\
&\le \int_{B_{,1}} f \left (\frac{x_\alpha}{h_k},  x_3, \nabla_{h_k} u_k \right ) \, dx + \int_{A_{,1} \setminus \overline{C}_{,1}} f \left (\frac{x_\alpha}{h_k}, x_3, \nabla_{h_k} v_k \right ) \, dx \\
& + \frac{C_0}{M} \int_{B_{,1} \setminus \overline{C}_{,1}} (1 + |\nabla_{h_k} u_k|^p + |\nabla_{h_k} v_k|^p + M^p |u_k - v_k|^p) \, dx.
\end{aligned}
\]
Therefore it is possible to find some indices $i_k \in \{0, \dots, M-1\}$ such that $\overline{w}_k := w_{k, i_k}$ satisfies
\[
\begin{aligned}
\int_{A_{,1}} &f \left (\frac{x_\alpha}{h_k},h_k x_3, \nabla_{h_k} \overline{w}_{k} \right ) \, dx\\
&\le\int_{B_{,1}} f \left (\frac{x_\alpha}{h_k}, x_3, \nabla_{h_k} u_k \right ) \, dx + \int_{A_{,1} \setminus \overline{C}_{,1}} f \left (\frac{x_\alpha}{h_k}, x_3,\nabla_{h_k} v_k \right ) \, dx \\
&+ \frac{C_0}{M} \int_{B_{,1} \setminus \overline{C}_{,1}} (1 + |\nabla_{h_k} u_k|^p + |\nabla_{h_k} v_k|^p + M^p |u_k - v_k|^p) \, dx.
\end{aligned}
\]
From \eqref{(3.4)} and \eqref{(3.5)}, we get that $\overline{w}_k \rightarrow u$ uniformly; moreover $\nabla\overline{w}_k \rightharpoonup \nabla_\alpha u$ in $L^p(\Omega; \mathbb{R}^3)$, $(\nabla_{h_k} \bar{w}_k)_k$ is bounded in $L^p(\Omega,\R^3)$, and finally $\overline{w}_k(x) = u(x_\alpha)$ whenever dist$(u(x_\alpha), \mathcal{K}) > 1$ for a.e. $x \in \Omega.$ Finally, we can conclude that
\[
\begin{aligned}
&I_{\mathcal{K}}^{\{h_k\}}(u, A)\\
&\le \limsup_{k \rightarrow + \infty} I^{h_k}(\overline{w}_k, A) \\
&\le \limsup_{k \rightarrow + \infty} \Bigg \{ I^{h_k}(u_k, B) + I^{h_k}(v_k, A \setminus \overline{C}) \\
&\qquad + \frac{C_0}{M} \int_{B_{,1} \setminus \overline{C}_{,1}} (1 + |\nabla_{h_k} u_k|^p + |\nabla_{h_k} v_k|^p + M^p |u_k - v_k|^p) \, dx  \Bigg \}\\
&\le I_{\mathcal{K}}^{\{h_k\}}(u, B) + I_{\mathcal{K}}^{\{h_k\}}(u, A \setminus \overline{C}) + 2 \eta + \frac{C_0}{M} \sup_{k \in \mathbb{N}} \int_{B_{,1} \setminus \overline{C}_1} (1 + |\nabla_{h_k} u_k|^p + |\nabla_{h_k} v_k|^p) \, dx.
\end{aligned}
\]
Taking the limit first as $M \rightarrow + \infty$ and then as $\eta \rightarrow 0,$ we obtain the desired subadditivity property \eqref{subadditivity}.

\noindent {\sc step 2:} At this point, by a standard diagonal argument, we construct a non-relabeled subsequence $h_k \rightarrow 0^+$ and a sequence $(u_k)_k \subset W^{1,p}(\Omega; \mathcal{M})$ satisfying

\begin{align}\nonumber
  \lim_{k \rightarrow + \infty}  & I^{\{h_k\}}(u_k, \omega)=\inf_{v_k}\Bigg\{ \limsup_{k\to \infty} I^{h_k}(v_k, A)\,:\, \nabla v_k\rightharpoonup \nabla_\alpha u \text{ in } L^{p}(\Omega; \R^3),\\
     &(\nabla_{h_k}v_k)_k \text{ is bounded in } L^p(\Omega,\R^3),\nonumber\\
     &v_k \to u \textnormal{ uniformly, } v_k(x)=u(x) \text{ if } {\rm dist}(u(x_\alpha), \mathcal{K})>1 \text{ for a.e. }x\in\Omega
     \Bigg\}. \nonumber
\end{align}

We have that 
\[
\lim_{k \rightarrow + \infty}   I^{{h_k}}(u_k, \omega) = I_{\mathcal{K}}^{\{h_k\}}(u, \omega),
\]
simply by construction of the sequences $(h_k)_k$ and $(u_k)_k.$ By possibly extracting a further subsequence, it is possible to assume that, for some non-negative Radon measure $\mu \in \mathcal{M}(\omega),$
\[
 \left(\int_{(-\frac{1}{2}, \frac{1}{2})}f \left (\frac{(\cdot)_\alpha}{h_k}, x_3, \nabla_{h_k} u_k((\cdot)_\alpha,x_3) \right )dx_3\right) \mathcal{L}^{2} \mres \omega \stackrel{*}{\rightharpoonup} \mu \qquad \textnormal{in} \,\,\,\mathcal{M}(\omega).
\]
We further have, by semicontinuity,
\[
\mu(\omega) \le \, \lim_{k \rightarrow + \infty} I^{\{h_k\}}(u_k, \omega) = I_{\mathcal{K}}^{\{h_k\}}(u, \omega).
\]
We would like to prove that
\[
I_{\mathcal{K}}^{\{h_k\}}(u, A) = \mu(A) \qquad \textnormal{for any $A \in \mathcal{A}(\omega)$}.
\]
Let us fix $A \in \mathcal{A}(\omega)$ and prove that
\[
I_{\mathcal{K}}^{\{h_k\}}(u, A) \le \mu(A) \qquad \textnormal{for any $A \in \mathcal{A}(\omega)$}.
\]
Fix an arbitrary $\eta > 0;$ exploiting \eqref{pgrowthcons}, we can select $C \in \mathcal{A}(\omega),$ $C \Subset A$ such that
\[
I_{\mathcal{K}}^{\{h_k\}}(u, A \setminus \overline{C}) \le \, \eta. 
\]
Then, by \eqref{subadditivity}, we infer that, for any $B \in \mathcal{A}(\omega),$ $C \Subset B \Subset A,$
\[
I_{\mathcal{K}}^{\{h_k\}}(u, A) \le \, \eta + \limsup_{k \rightarrow + \infty} I^{{h_k}}(u_k, B) \le \, \eta + \mu(\overline{B}) \le \, \eta + \mu(A).
\]
By the arbitrariness of $\eta,$ we come to the desired conclusion.
\\
Conversely, for any $B \in \mathcal{A}(\omega),$ $B \Subset A,$ we may deduce
\begin{eqnarray*}
\mu(\omega) & \le & \, I_{\mathcal{K}}^{\{h_k\}}(u, \omega) \\&\le& I_{\mathcal{K}}^{\{h_k\}}(u, A) + I_{\mathcal{K}}^{\{h_k\}}(u, \omega \setminus \overline{B})\\
&\le &\, I_{\mathcal{K}}^{\{h_k\}}(u, A) + \mu(\omega \setminus \overline{B}) \\&\le &\, I_{\mathcal{K}}^{\{h_k\}}(u, A) + \mu(\omega \setminus B) \le \, I_{\mathcal{K}}^{\{h_k\}}(u, A) + \mu(\omega) - \mu(B).
\end{eqnarray*}
This finally entails that
\[
\mu(B) \le \, I_{\mathcal{K}}^{\{h_k\}}(u, A),
\]
which yields the desired conclusion by the inner regularity of $\mu.$
\color{black}   
\end{proof}

\noindent
Now we can prove the $\limsup$-inequality for Theorem \ref{casopm1}.

\begin{prop}[$\Gamma$-limsup]
For every $p\ge 1$ and $u\in W^{1,p}(\omega; \mathcal{M})$ it holds
\begin{equation*}
    I(u)\ge I^0(u),
\end{equation*}
where $I$ and $I_0$ are defined by \eqref{candidatepm1} and \eqref{Io}, respectively.
\end{prop}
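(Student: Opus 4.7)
The strategy is to exploit the Radon measure structure provided by Lemma \ref{preupperbound}. Fix an exhaustion $\mathcal K_1 \subset \mathcal K_2 \subset \ldots$ of $\mathcal M$ by compact sets; for each $n$ apply the lemma to obtain a subsequence $(h_k^{(n)})_k$ such that $\mu_n := I^{\{h_k^{(n)}\}}_{\mathcal K_n}(u,\cdot)$ is a Radon measure on $\mathcal A(\omega)$ absolutely continuous with respect to $\mathcal L^2$. Since any sequence admissible in the definition of $I^{\{h_k\}}_{\mathcal K_n}$ is admissible in the definition of $I^0$ (and $\limsup \ge \liminf$), one has $I^0(u) \le \mu_n(\omega)$. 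It therefore suffices to prove the pointwise bound
$$
\frac{d\mu_n}{d\mathcal L^2}(x_0) \le Tf^0_{\rm hom}\bigl(u(x_0),\nabla_\alpha u(x_0)\bigr) \qquad \text{for $\mathcal L^2$-a.e.\ $x_0 \in \{u \in \mathcal K_n\}$},
$$
and then let $n \to \infty$, using the $p$-growth of $Tf^0_{\rm hom}$ from Proposition \ref{characterization}(iii) to dispose of the set $\{u \notin \mathcal K_n\}$.

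The pointwise estimate is obtained at each Lebesgue point $x_0$ of $u$, $\nabla_\alpha u$ and $d\mu_n/d\mathcal L^2$. Set $s_0 := u(x_0)$ and $\xi_0 := \nabla_\alpha u(x_0) \in [T_{s_0}(\mathcal M)]^2$, and fix $\varepsilon > 0$. Choose $t > 0$ large and a near-optimal tangential corrector $\varphi \in W^{1,\infty}((tQ')_{,1}; T_{s_0}(\mathcal M))$ with $\varphi = 0$ on $\partial(tQ')\times(-\tfrac12,\tfrac12)$ so that the integrand in \eqref{Tfhom} is within $\varepsilon$ of $Tf^0_{\rm hom}(s_0,\xi_0)$. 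On a small cube $Q'(x_0,\rho)$ I would tile by $h_k t\,Q'$-cells and glue rescaled periodic copies of $\varphi$: set
$$
v_k(x) := s_0 + \xi_0(x_\alpha - x_0) + h_k\,\varphi\!\Bigl(\tfrac{x_\alpha - c_{k,j}}{h_k}, x_3\Bigr)
$$
on the $j$-th cell, with $c_{k,j}$ its $x_\alpha$-center. Since $\varphi$ vanishes on the lateral boundary of each cell, $v_k$ is continuous, and a change of variables together with the $\mathbb Z^2$-periodicity of $f$ yields
$$
\int_{Q'(x_0,\rho)\times(-\tfrac12,\tfrac12)} f\bigl(\tfrac{x_\alpha}{h_k}, x_3, \nabla_{h_k} v_k\bigr)\, dx \le \bigl(Tf^0_{\rm hom}(s_0,\xi_0)+\varepsilon\bigr)\rho^2 + o(1).
$$
The image of $v_k$ lies at distance $O(\rho|\xi_0|+h_k\|\varphi\|_\infty)$ from $s_0$, hence in a tubular neighborhood of $\mathcal M$ for $\rho, h_k$ small, where the nearest-point projection $\pi_{\mathcal M}$ is smooth and satisfies $D\pi_{\mathcal M}(s) = P_s$ on $\mathcal M$. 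Setting $u_k := \pi_{\mathcal M}(v_k)$ produces an $\mathcal M$-valued competitor, and because every column of $\nabla v_k$ lies in $T_{s_0}(\mathcal M)$ up to a perturbation vanishing as $\rho, h_k \to 0$, one has $\nabla u_k \approx \nabla v_k$ in $L^p$, so the energy estimate persists by continuity of $f$ and (H2).

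The remaining and, in my view, most delicate step is to turn this local construction into a global competitor admissible in the definition of $\mu_n(Q'(x_0,\rho))$: one must glue $u_k$ inside $Q'(x_0,\rho)$ with the trivial extension $u$ outside, while keeping $\mathcal M$-valuedness, uniform convergence, and weak $W^{1,p}$-convergence of the full gradient. This is handled exactly as in Step~1 of Lemma \ref{preupperbound}, via the $\Phi$-interpolation of Dacorogna--Fonseca--Mal\'y--Trivisa: it interpolates between two $\mathcal M$-valued maps that are close in $L^\infty$ through an intermediate strip, at an extra cost of order $1/M$ that vanishes as the number of slices $M \to \infty$. Dividing by $\rho^2$, passing to $\rho \to 0$, then letting $\varepsilon \to 0$ and $n \to \infty$ gives the desired inequality $I^0(u) \le I(u)$.
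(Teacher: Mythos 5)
Your overall architecture (Lemma \ref{preupperbound}, bound on the Radon--Nikod\'ym derivative at Lebesgue points, a near-optimal corrector from \eqref{Tfhom}, nearest-point projection, exhaustion in $R$) is the paper's, but the core construction has a genuine gap at exactly the point you flag. You build the competitor on the small cube from the \emph{affine blow-up}, $v_k = s_0+\xi_0(x_\alpha-x_0)+h_k\varphi(\cdot/h_k,\cdot)$, and then propose to glue $\pi_{\mathcal M}(v_k)$ to $u$ across a transition strip via the Dacorogna--Fonseca--Mal\'y--Trivisa interpolation ``exactly as in Step~1'' of Lemma \ref{preupperbound}. That step does not transfer. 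First, $\Phi$ is only defined on $D_\delta$, i.e.\ when the two maps are within $\delta$ \emph{pointwise}; at a Lebesgue point, $u$ is close to its affine blow-up only in an $L^p$-averaged sense (for $p\le 2$, $u\in W^{1,p}(\omega;\mathcal M)$ need not even be continuous), so on the strip the pair $(u,\pi_{\mathcal M}(v_k))$ may leave $D_\delta$ on a set of positive measure and the interpolation is not defined there. Second, the interpolation cost in Step~1 contains the term $M^p|u_k-v_k|^p$, which is killed there only because \emph{both} sequences converge uniformly to the same limit $u$ as $k\to\infty$; in your setting the mismatch $|u-\pi_{\mathcal M}(v_k)|$ does not vanish as $k\to\infty$ at fixed $\rho$, so this term survives. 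Third, and independently of the gluing, the admissible class defining $I^{\{h_k\}}_{\mathcal K}(u,Q'(x_0,\rho))$ requires $u_k\to u$ \emph{uniformly} on $\Omega$ and $\nabla u_k\rightharpoonup\nabla_\alpha u$; your cube competitor converges (uniformly, as $k\to\infty$) to $\pi_{\mathcal M}(s_0+\xi_0(\cdot-x_0))$, not to $u$, so it is not admissible for the measure you are differentiating, and you cannot simply fall back on $I^0$ because the measure property of Lemma \ref{preupperbound} was established only for that restricted class.

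The paper avoids all of this by never comparing $u$ with its affine blow-up in $L^\infty$: the oscillation is attached to $u$ itself, with a cutoff in the \emph{target},
\begin{equation*}
w_k(x):=u(x_\alpha)+h_k\,\zeta\bigl(u(x_\alpha)-s_0\bigr)\,\varphi\!\left(\tfrac{x_\alpha}{h_k},x_3\right),\qquad u_k:=\Pi(w_k),
\end{equation*}
so $u_k\to u$ uniformly on all of $\Omega$, $u_k=u$ where ${\rm dist}(u,\mathcal K)>1$, and no gluing in the domain is needed. The price is that $\nabla_{h_k}u_k$ equals $\nabla_{h_k}\varphi_k$ (with $\varphi_k=\xi_0x_\alpha+h_k\varphi(x_\alpha/h_k,x_3)$) only up to errors controlled by $|\nabla\Pi(w_k)-\nabla\Pi(s_0)|$ and $|\nabla_\alpha u-\xi_0|$; this is handled by splitting $Q'(x_0,\rho)$ into $\{|u-s_0|\ge\delta/4\}$, $\{|\nabla_\alpha u-\xi_0|\ge\sigma\}$ (both negligible as $\rho\to0$ by the Lebesgue-point choice) and the good set, where a modulus of continuity of $f(y,\cdot)$ on $B^{3\times3}(0,M)$ plus the Riemann--Lebesgue lemma gives an error $\int_Q\omega(y,C_1\sigma)\,dy\to0$ as $\sigma\to0$. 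If you want to salvage your affine-based construction you would need an additional averaging/slicing argument to select a transition layer where $|u-(s_0+\xi_0(\cdot-x_0))|$ is small in the right norm, plus a truncation to restore pointwise closeness; as written, the gluing step fails.
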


\begin{proof}
Let $u \in W^{1,p}(\omega; \mathcal{M}).$ Consider $R > 0$ arbitrarily large, set
\[
\mathcal{K} := \mathcal{M} \cap B^3(0,R),
\]
and consider the sequence $(h_k)_k$ given by Lemma \ref{preupperbound}. It is clear that
\[
I^0(u) \le \, I^{\{h_k\}}_\mathcal{K}(u, \omega).
\]
Now, we would like to show that
\begin{equation}
\label{(4.1)}
I^{\{h_k\}}_\mathcal{K}(u, \omega) \le \int_{\omega} \left \{ \chi_R(|u|) T f_{\rm hom}^0 (u, \nabla_\alpha u) + \beta (1 - \chi_R(|u|)) (1 + |\nabla_\alpha u|^p)\right \} \, dx_\alpha,
\end{equation}
where 
\[
\chi_R(t) = 
\left \{
\begin{array}{lll}
\!\!\! 1 \qquad \textnormal{for $t \le R$}\\
\\
\!\!\! 0 \qquad \textnormal{otherwise}
\end{array}
\right.
\]
In order to deduce \eqref{(4.1)}, it is enough to prove that
\[
\frac{d I^{\{h_k\}}_\mathcal{K}(u, \cdot) }{d \mathcal{L}^2} (x_0) \le \, \chi_R(|u(x_0)|) T f_{\rm hom}^0 (u(x_0), \nabla u(x_0)) + \beta (1 - \chi_R(|u(x_0)|) (1 + |\nabla u(x_0)|^p),
\]
for $\mathcal{L}^2-$a.e. $x_0 \in \omega.$
\\
Let us consider $x_0 \in \omega$ to be a Lebesgue point of $u$ and $\nabla_\alpha u$ such that $u(x_0) \in \mathcal{M},$ $\nabla_\alpha u(x_0) \in [T_{u(x_0)}(\mathcal{M})]^2,$ and the Radon-Nykod\'ym derivative of $I^{\{h_k\}}_\mathcal{K}(u, \cdot)$ with respect to the Lebesgue measure $\mathcal{L}^2$ exists. We observe that almost every point in $\omega$ satisfies these properties. Moreover let us set 
\[
s_0 := u(x_0) \qquad \textnormal{and} \qquad \xi_0 := \nabla_\alpha u(x_0).
\]
Assume first that $s_0 \notin \mathcal{K}.$ Then, using (H2), we obtain that
\begin{eqnarray}
\label{caso1}
\frac{d I^{\{h_k\}}_\mathcal{K}(u, \cdot) }{d \mathcal{L}^2} (x_0) &= &\lim_{\rho \rightarrow 0^+} \frac{I^{\{h_k\}}_\mathcal{K}(u, Q'(x_0, \rho))}{\rho^2} \le \, \limsup_{\rho \rightarrow 0^+} \limsup_{k \rightarrow + \infty} \rho^{-2} I^{h_k}(u, Q'(x_0, \rho))\nonumber \\
&\le & \, \lim_{\rho \rightarrow 0^+} \frac{\beta}{\rho^2} \int_{Q'(x_0, \rho)} (1 + |\nabla_\alpha u|^p) \, dx = \beta (1 + |\xi_0|^p),
\end{eqnarray}
which is what we would like to prove.\\

\noindent
If instead $s_0 \in \mathcal{K},$ then, fixed an arbitrary $0 < \eta < 1,$ Proposition \ref{characterization}-(i) entails the existence of $j \in \mathbb{N}$ and $\varphi \in W^{1, \infty}((jQ')_{,1}; T_{s_0}(\mathcal{M}))$  such that $\varphi(x_{\alpha}, x_3) = 0$ for every $(x_{\alpha}, x_3) \in \partial (jQ') \times (0,1)$ and such that

\begin{equation}
\label{(4.2)}
\media_{(jQ')_{,1}} f(y, \xi_0 + \nabla_\alpha \varphi(y)| \nabla_3\varphi(y)) \, d y \le \, T f_{\rm hom}^0 (s_0, \xi_0) + \eta.
\end{equation}
At this point, let us extend $\varphi((\cdot)_\alpha, x_3)$ to $\mathbb{R}^2$ by $j$-periodicity and define $\varphi_k(x) := \xi_0 x_\alpha + h_k \varphi\left (\frac{x_\alpha}{h_k}, x_3 \right ).$
\\
Consider $\mathcal{U}$ to be an open neighborhood of $\mathcal{M}$ such that the nearest point projection $\Pi: \mathcal{U} \rightarrow \mathcal{M}$ defines a $\mathcal{C}^1-$mapping; fix $\sigma, \delta_0 \in (0,1)$ such that $B^3(s_0, 2 \delta_0) \subset \mathcal{U}$ and consider $\delta = \delta(\sigma) \in (0, \delta_0)$ such that
\begin{equation}
\label{(4.3)}
|\nabla \Pi(s) - \nabla \Pi(s')| < \sigma \qquad \textnormal{for all $s, s' \in B^3(s_0, \delta_0)$ satisfying $|s - s'| < \delta$}.
\end{equation}
Introduce the cut-off function $\zeta \in \mathcal{C}^{\infty}_c(\mathbb{R}^3; [0,1])$ as
\[
\zeta(X) = 
\left \{
\begin{array}{lll}
\!\!\! 1 \qquad \textnormal{for $X \in B^3(0, \delta/4)$}\\
\!\!\! 0\qquad \textnormal{for $X \notin B^3(0, \delta/2)$}
\end{array}
\right. \qquad \textnormal{with} \qquad |\nabla \zeta| \le \frac{C}{\delta},
\]
and define
\[
w_k(x) := u(x_\alpha) + h_k \zeta (u(x_\alpha) - s_0) \varphi \left ( \frac{x_\alpha}{h_k},x_3\right ).
\]
We recall that the function $u$ has to be intended in the sense of  Remark \ref{abuso}, so $w_k$ is well defined as a function of $(x_\alpha,x_3)$.
Let $k_0 \in \mathbb{N}$ be such that
\begin{equation}
\label{(4.4)}
\max \left \{h_k \|\varphi\|_{L^{\infty}((jQ')_{,1}; \mathbb{R}^3)} \|\nabla \zeta\|_{L^{\infty}(\mathbb{R}^3; \mathbb{R}^3)}, \frac{2 h_k \|\varphi\|_{L^{\infty}((jQ')_{,1}; \mathbb{R}^3)}}{\delta} \right \} < 1 ,
\end{equation}
for any $k \ge k_0$ and define, still for every $k \ge k_0,$
\[
u_k(x) := \Pi (w_k(x)).
\]
By \eqref{(4.4)}, for a.e. $x \in \Omega$ and all $k \ge k_0,$ we deduce that $w_k(x) \in B^3(s_0, \delta)$ when $|u(x_\alpha) - s_0| < \delta/2$, while $w_k(x) = u(x_\alpha)$ when $|u(x_\alpha) - s_0| \ge \, \delta/2.$ Thus $u_k$ is well defined, $(u_k)_k \subset W^{1,p}(\Omega; \mathcal{M}),$ and, for a.e.\,$x \in \Omega,$ $u_k(x) = u(x_\alpha)$ when dist$(u(x_\alpha), \mathcal{K}) > 1.$
\\
In addition to that
\begin{eqnarray*}
\|u_k - u\|_{L^{\infty}(\Omega; \mathbb{R}^3)} &=& \|\Pi(w_k) - \Pi(u)\|_{L^{\infty}(\{|u - s_0| < \delta/2\}_{,1}; \mathbb{R}^3)}\\
&\le &  \, h_k\|\nabla \Pi\|_{L^{\infty}(B^3(s_0, \delta_0); \mathbb{R}^3)} \|\varphi\|_{L^{\infty}((jQ')_{,1}; \mathbb{R}^3)} \rightarrow 0 \qquad \textnormal{as $k \rightarrow + \infty.$}
\end{eqnarray*}
By the Chain Rule formula, we can compute
\begin{eqnarray*}
\nabla_{h_k} u_k(x) &=& \nabla\Pi(w_k(x)) \Bigg [\nabla_\alpha u(x_\alpha) + h_k \left (\varphi \left (\frac{x_\alpha}{h_k},x_3 \right ) \otimes \nabla \zeta (u(x_\alpha) - s_0) \right ) \nabla_\alpha u(x_\alpha)\\
&& + h_k\zeta (u(x_\alpha) - s_0) \nabla_{h_k} \left(\varphi \left (\frac{x_\alpha}{h_k}, x_3 \right ) \right)\Bigg ],
\end{eqnarray*}
which, in turn, entails
\begin{eqnarray*}
&& |\nabla_{h_k} u_k(x)| \nonumber \\
&\le& \, \|\nabla \Pi\|_{L^{\infty}(B^3(s_0, \delta_0); \mathbb{R}^3)} \Bigg [ \left (1 + h_k \|\varphi\|_{L^{\infty}((jQ')_{,1}; \mathbb{R}^3)} \|\nabla \zeta\|_{L^{\infty} (\mathbb{R}^3; \mathbb{R}^3)}\right ) |\nabla_\alpha u(x_\alpha)|\\
&& + \|\nabla \varphi\|_{L^{\infty}((jQ')_{,1}; \mathbb{R}^3)}\Bigg].
\end{eqnarray*}
Once more \eqref{(4.4)} entails that, for any $k \ge k_0$ and for some constant $C_0$ depending on $s_0, \xi_0, \delta_0, \eta$ and independent of $x$ and $k,$
\begin{equation}
\label{(4.5)}
|\nabla_{h_k} u_k(x)| \le \, C_0 (|\nabla_\alpha u(x_\alpha) - \xi_0| + 1).
\end{equation}

\noindent Thus, we can conclude that the sequence $(u_k)_k$ is uniformly bounded in $W^{1,p}(\Omega; \mathbb{R}^3)$ so that $\nabla u_k \rightharpoonup \nabla_\alpha u$ in $L^p(\Omega; \mathbb{R}^3).$ Moreover, we observe that $|\nabla_{h_k} u_k| \le \, 2 C_0$ a.e. in $\{|\nabla_\alpha u - \xi_0| < \sigma\}_{,1}$ and
\begin{align*}
    \|\nabla_{h_k} \varphi_k\|_{L^{\infty}(\Omega; \mathbb{R}^{3 \times 2})} &\le \, |\xi_0| +\|\nabla \varphi\|_{L^{\infty}((jQ')_{,1}; \mathbb{R}^3)}
\end{align*}
Set 
\begin{equation}
\label{(4.6)}
M := \max \{2 C_0, |\xi_0| +\|\nabla \varphi\|_{L^{\infty}((jQ')_{,1}; \mathbb{R}^3)}\},
\end{equation}
depending only on $s_0, \xi_0, \delta_0$ and $\eta$, so that
\begin{equation}
\label{(4.7)}
|\nabla_{h_k} u_k| \le \, M \qquad \textnormal{and} \qquad |\nabla_{h_k} \varphi_k| \le \, M \qquad \textnormal{a.e. in $\{|\nabla_\alpha u - \xi_0| < \sigma\}_{,1}.$}
\end{equation}
At this point, for a.e. 
\[
x \in \{|u - s_0| < \delta/4\}_{,1} \cap \{|\nabla_\alpha u - \xi_0| < \sigma\}_{,1},
\]
we have $\zeta (u(x) - s_0) = 1$ and
\begin{eqnarray*}
|\nabla_{h_k} u_k(x) - \nabla_{h_k} \varphi_k(x)| &\le & \, |\nabla \Pi(w_k) \nabla_\alpha u(x_\alpha) - \xi_0| \\
&& + |h_k\nabla \Pi(w_k) \nabla_{h_k} (\varphi(x_\alpha/h_k,x_3)) - h_k\nabla_{h_k} (\varphi(x_\alpha/h_k,x_3))|\\
&\le& |\nabla \Pi(w_k) - \nabla \Pi(s_0)| \, |\nabla_\alpha u(x_\alpha)| + |\nabla \Pi(s_0)| |\nabla_\alpha u(x_\alpha) - \xi_0|\\
&&+ |\nabla \Pi(w_k) - \nabla \Pi(s_0)| \, \|\nabla \varphi\|_{L^{\infty}((jQ')_{,1}; \mathbb{R}^3)};
\end{eqnarray*}
where, in the last inequality, we have used the fact that, since $\nabla \varphi(y) \in [T_{s_0} (\mathcal{M})]^3$ for a.e. $y \in \mathbb{R}^3,$ it hold $\nabla \Pi(s_0) \nabla \varphi(y) = \nabla \varphi(y)$ and $\nabla \Pi(s_0) \xi_0 = \xi_0.$
\\
Now, taking into account \eqref{(4.3)} and the fact that $|w_k - s_0| < \delta$ a.e. in $\{|u - s_0| < \delta/4\}_{,1} \cap \{|\nabla u - \xi_0| < \sigma\}_{,1},$ we deduce
\begin{equation}
\label{(4.8)}
|\nabla_{h_k} u_k(x) - \nabla_{h_k} \varphi_k(x)| \le \, (|\nabla_\alpha u(x_\alpha)| + |\nabla \Pi(s_0)| + \|\nabla \varphi\|_{L^{\infty}((jQ')_{,1}; \mathbb{R}^3)}) \sigma \le \, C_1 \sigma,
\end{equation}
a.e. in $\{|u - s_0| < \delta/4\}_{,1} \cap \{|\nabla_\alpha u - \xi_0| < \sigma\}_{,1},$ where $C_1 = C_1 (s_0, \xi_0, \delta_0, \eta) > 0$ is a constant independent of $\sigma, k$ and $x.$
\\
Now we estimate

\begin{align}
&\frac{d I^{\{h_k\}}_\mathcal{K}(u, \cdot) }{d \mathcal{L}^2} (x_0)\nonumber\\
&=\lim_{\rho \rightarrow 0^+} \frac{I^{\{h_k\}}_\mathcal{K}(u, Q'(x_0, \rho))}{\rho^2} \nonumber \\
&\le \limsup_{\rho \rightarrow 0^+} \limsup_{k \rightarrow + \infty} \frac{1}{\rho^2} \int_{Q'(x_0, \rho)_{,1}} f\left (\frac{x_\alpha}{h_k}, x_3, \nabla_{h_k} u_k \right ) \, dx \nonumber\\
&\le \limsup_{\rho \rightarrow 0^+} \limsup_{k \rightarrow + \infty} \frac{1}{\rho^2} \int_{Q'(x_0, \rho)_{,1} \cap \{|u - s_0| \ge \, \delta/4\}_{,1}} f\left (\frac{x_\alpha}{h_k}, x_3, \nabla_{h_k} u_k \right ) \, dx \nonumber\\
& \quad +\limsup_{\rho \rightarrow 0^+} \limsup_{k \rightarrow + \infty} \frac{1}{\rho^2} \int_{Q'(x_0, \rho)_{,1} \cap \{|u - s_0| < \delta/4\}_{,1} \cap \{|\nabla u - \xi_0| < \sigma\}_{,1}} f\left (\frac{x_\alpha}{h_k}, x_3, \nabla_{h_k} u_k \right ) \, dx \nonumber\\
& \quad + \limsup_{\rho \rightarrow 0^+} \limsup_{k \rightarrow + \infty} \frac{1}{\rho^2} \int_{Q'(x_0, \rho)_{,1} \cap \{|u - s_0| < \delta/4\}_{,1} \cap \{|\nabla u - \xi_0| \ge \sigma\}_1} f\left (\frac{x_\alpha}{h_k}, x_3, \nabla_{h_k} u_k \right ) \, dx \nonumber\\
&=: I_1 + I_2 + I_3. \label{(4.9)}
\end{align}
The bound \eqref{(4.5)}, the $p$-growth condition (H2) and the selected choice of $x_0$ yield
\begin{eqnarray*}
I_1 &\le &\, C \, \limsup_{\rho \rightarrow 0^+} \frac{1}{\rho^2} \int_{Q'(x_0, \rho) \cap \{|u - s_0| \ge \, \delta/4\}} (1 + |\nabla_\alpha u(x_\alpha) - \xi_0|^p) \, dx\nonumber \\
&\le & C \, \limsup_{\rho \rightarrow 0^+} \media_{Q'(x_0, \rho)} |\nabla_\alpha u(x_\alpha) - \xi_0|^p \, dx + \frac{4C}{\sigma} \limsup_{\rho \rightarrow 0^+} \media_{Q'(x_0, \rho)} |u(x_\alpha) - s_0| \, dx = 0, 
\end{eqnarray*}
while
\begin{eqnarray}
I_3 &\le &\, C \, \limsup_{\rho \rightarrow 0^+} \frac{1}{\rho^2} \int_{Q'(x_0, \rho) \cap \{|u - s_0| < \, \delta/4\} \cap \{|\nabla_\alpha u - \xi_0| \ge \, \sigma\}} (1 + |\nabla_\alpha u(x_\alpha) - \xi_0|^p) \, dx \nonumber\\
&\le & C \, \limsup_{\rho \rightarrow 0^+} \media_{Q'(x_0, \rho)} |\nabla_\alpha u(x_\alpha) - \xi_0|^p \, dx\nonumber \\
&& + \frac{C}{\sigma} \limsup_{\rho \rightarrow 0^+} \media_{Q'(x_0, \rho)} |\nabla_\alpha u(x_\alpha) - \xi_0| \, dx = 0,  \label{(4.11)}
\end{eqnarray}
Concerning integral $I_2,$ since, for a.e. $y \in \mathbb{R}^3,$ the function $f(y, \cdot)$ is continuous, it is uniformly continuous on $B^{3 \times 3}(0, M)$ where $M > 0$ has been introduced in \eqref{(4.6)}.
\\
Define
\[
\omega(y, t) := \sup \{|f(y, \xi) - f(y, \xi')|: \xi, \xi' \in B^{3 \times 3}(0, M) \,\,\, \textnormal{and} \,\,\, |\xi - \xi'| \le t\},
\]
to be the modulus of continuity of $f(y, \cdot)$ over $B^{3 \times 3}(0, M).$ It is not difficult to see that $\omega(y, \cdot)$ is increasing, continuous and $\omega(y, 0) = 0;$ on the other hand $\omega(\cdot, t)$ is measurable, since the supremum can be restricted to all admissible $\xi$ and $\xi'$ having rational entries, and 1-periodic.
\\
Accounting on \eqref{(4.7)} and \eqref{(4.8)}, we are able to estimate for a.e. $x \in Q'(x_0, \rho)_{,1} \cap \{|u - s_0| < \delta/4\}_{,1} \cap \{|\nabla_\alpha u - \xi_0| < \sigma\}_{,1}$
\[
\left | f \left ( \frac{x_\alpha}{h_k}, x_3, \nabla_{h_k} u_k(x)\right ) -  f \left ( \frac{x_\alpha}{h_k}, x_3, \nabla_{h_k} \varphi_k(x)\right )\right | \le \, \omega \left (\frac{x_\alpha}{h_k}, x_3, C_{1} \sigma \right ).
\]
Integrating over $x \in Q'(x_0, \rho)_{,1} \cap \{|u - s_0| < \delta/4\}_{,1} \cap \{|\nabla_\alpha u - \xi_0| < \sigma\}_{,1}$ and taking the limit as $k \rightarrow + \infty,$ we obtain
\begin{eqnarray*}
&& \limsup_{k \rightarrow + \infty} \frac{1}{\rho^2} \int_{ Q'(x_0, \rho)_{,1} \cap \{|u - s_0| < \delta/4\}_{,1} \cap \{|\nabla_\alpha u - \xi_0| < \sigma\}_{,1}} \Bigg| f \left ( \frac{x_\alpha}{h_k}, x_3, \nabla_{h_k} u_k(x)\right ) \\
&& -  f \left ( \frac{x_\alpha}{h_k}, x_3,\nabla_{h_k} \varphi_k(x)\right )\Bigg | \, dx \\
&\le & \limsup_{k \rightarrow + \infty} \frac{1}{\rho^2} \int_{Q'(x_0, \rho)_{,1}} \omega \left (\frac{x_\alpha}{h_k}, x_3, C_{1} \sigma \right ) \, dx \\
&=& \int_{Q} \omega(y, C_{1} \sigma) \, dy,
\end{eqnarray*}
where we have exploited the Riemann-Lebesgue Lemma and where we used the fact that $y \mapsto \omega(y, C_{1} \sigma)$ is a measurable 1-periodic function. 
\\
The Dominated Convergence Theorem and the fact that $\omega(y, 0) = 0$ for every $y\in\R^3$ yield 
\begin{equation}
\label{(4.12)}
\lim_{\sigma \rightarrow 0^+} \int_{Q} \omega(y, C_{1} \sigma) \, dy = 0.
\end{equation}
Summing up, 
\begin{equation}
\label{(4.13)}
I_2 \le \,  \limsup_{\rho \rightarrow 0^+}  \limsup_{k \rightarrow + \infty} \frac{1}{\rho^2} \int_{Q'(x_0, \rho)_{,1}} f \left ( \frac{x_\alpha}{h_k}, x_3, \nabla_{h_k} \varphi_k(x)\right ) \, dx + \int_{Q'} \omega(y, C_{1} \sigma) \, dy.
\end{equation}
By the definition of $\varphi_k$ and once more the Riemann-Lebesgue Lemma, we are able to deduce from \eqref{(4.2)} that
\begin{eqnarray}
 &&\limsup_{\rho \rightarrow 0^+}  \limsup_{k \rightarrow + \infty} \frac{1}{\rho^2} \int_{Q'(x_0, \rho)_{,1}} f \left ( \frac{x_\alpha}{h_k}, x_3, \nabla_{h_k} \varphi_k \left (x \right)\right ) \, dx\nonumber\\
 &= &\media_{(jQ')_{,1}} f(y,\xi_0 + \nabla_\alpha\varphi(y)| \nabla_3\varphi(y)) \, dy \nonumber\\
 &\le & \, T f_{\rm hom}^0 (s_0, \xi_0) + \eta.  \label{(4.14)}
\end{eqnarray}
Finally, collecting \eqref{(4.9)}--\eqref{(4.11)}, \eqref{(4.13)} and also \eqref{(4.14)}, we are able to obtain
\[
\frac{d I^{\{h_k\}}_\mathcal{K}(u, \cdot) }{d \mathcal{L}^2} (x_0) \le \, T f_{\rm hom}^0 (s_0, \xi_0) + \int_Q \omega(y, C_{1} \sigma) \, dy + \eta.
\]
The thesis now follows by sending first $\sigma \rightarrow 0$ (exploiting \eqref{(4.12)}) and then $\eta \rightarrow 0.$
\\
Once \eqref{(4.1)} is obtained, we can conclude by considering a sequence $R_j \rightarrow + \infty$ as $j \rightarrow + \infty.$ By \eqref{caso1} and \eqref{(4.14)}, since $\chi_{R_j} \rightarrow 1$ pointwise, we can deduce from Dominated Convergence Theorem together with the $p$-growth of $Tf^0_{\rm hom}$(see Proposition \ref{characterization} (iii)) that
\begin{eqnarray*}
I^0(u) &\le &\limsup_{j \rightarrow + \infty} \int_{\Omega} \left \{\chi_{R_j}(|u|) T f_{\rm hom}^0 (u, \nabla_\alpha u) + \beta (1 - \chi_{R_j}(|u|)) (1 + |\nabla_\alpha u|^p) \right \} \, dx \\
&\le & \, \int_{\Omega} T f_{\rm hom}^0(u, \nabla_\alpha u) \, dx,
\end{eqnarray*}
which is what we wanted to prove.
    \color{black}
\end{proof}

\noindent
Now we prove the $\liminf$-inequality for Theorem \ref{casopm1}, i.e. for $p>1$.

\begin{prop}[$\Gamma$-liminf]
    For every $p> 1$ and $u\in W^{1,p}(\omega; \mathcal{M})$ it holds
    \begin{equation*}
        I(u)\le I^0(u),
    \end{equation*}
    where $I$ is defined in \eqref{candidatepm1}, while $I^0$ is defined in \eqref{Io}.
\end{prop}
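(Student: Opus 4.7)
The plan is to combine a measure-theoretic localization with a blow-up argument at Lebesgue points, in the spirit of \cite{BM}. Pick a sequence $(u_n) \subset W^{1,p}(\Omega; \mathcal{M})$ with $u_n \to u$ in $L^p(\Omega; \mathcal{M})$ and $\liminf_n I^{h_n}(u_n) = I^0(u)$; we may assume $I^0(u) < +\infty$ and, up to a subsequence (not relabeled), the liminf is attained as a limit. The $p$-coercivity in (H2) bounds $(\nabla_{h_n} u_n)_n$ in $L^p(\Omega; \R^{3 \times 3})$, so $\nabla_3 u_n \to 0$ in $L^p$ and $\nabla_\alpha u_n \rightharpoonup \nabla_\alpha u$ in $L^p$; in particular $u$ is independent of $x_3$ and belongs to $V(\omega) \cong W^{1,p}(\omega; \mathcal{M})$ (Remark \ref{abuso}). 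For every $A \in \mathcal{A}(\omega)$ set $\mu_n(A) := \int_{A_{,1}} f(x_\alpha/h_n, x_3, \nabla_{h_n} u_n)\,dx$; along a further subsequence, $\mu_n \stackrel{*}{\rightharpoonup} \mu$ in $\mathcal{M}(\omega)$ for a nonnegative Radon measure $\mu$ with $\mu(\omega) \le I^0(u)$. Writing $\mu = \tfrac{d\mu}{d\mathcal{L}^2}\,\mathcal{L}^2 + \mu^s$ with $\mu^s \ge 0$ singular, the whole statement reduces to the pointwise lower bound $\tfrac{d\mu}{d\mathcal{L}^2}(x_0) \ge T f^0_{\rm hom}(u(x_0), \nabla_\alpha u(x_0))$ for $\mathcal{L}^2$-a.e.\ $x_0 \in \omega$.

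Fix such a Lebesgue point and set $s_0 := u(x_0) \in \mathcal{M}$, $\xi_0 := \nabla_\alpha u(x_0) \in [T_{s_0}(\mathcal{M})]^2$. For a.e.\ $\rho > 0$ with $\mu(\partial Q'(x_0,\rho)) = 0$,
\[
\frac{d\mu}{d\mathcal{L}^2}(x_0) = \lim_{\rho \to 0^+} \frac{1}{\rho^2} \lim_{n \to \infty} \int_{Q'(x_0, \rho)_{,1}} f\!\left(\tfrac{x_\alpha}{h_n}, x_3, \nabla_{h_n} u_n\right) dx.
\]
Change variables $x_\alpha = x_0 + h_n z_\alpha$, so that with $t_n := \rho/h_n \to +\infty$ the integral is rescaled onto $(t_n Q' + \{x_0/h_n\})_{,1}$; the $1$-periodicity of $f$ in the first slot aligns this with the centered slab $(t_n Q')_{,1}$ up to a translation of order $O(1)$, negligible in the blow-up. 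Introduce the scaled corrector
\[
\psi_n(z_\alpha, x_3) := \frac{u_n(x_0 + h_n z_\alpha, x_3) - s_0}{h_n} - \xi_0 z_\alpha,
\]
for which $(\xi_0 + \nabla_\alpha \psi_n \,|\, \nabla_3 \psi_n) = \nabla_{h_n} u_n(x_0 + h_n z_\alpha, x_3)$; the Lebesgue-point property of $u$ and the convergence $u_n \to u$ in $L^p$ force $\psi_n \to 0$ in an averaged $L^p$ sense on $(t_n Q')_{,1}$. A De Giorgi slicing between concentric cubes of sides $t_n$ and $t_n - 1$ (pigeonhole over $N_n$ nested strips, $N_n \to \infty$) produces $\tilde\psi_n \in W^{1,p}((t_n Q')_{,1}; \R^3)$ vanishing on $\partial(t_n Q') \times (-\tfrac{1}{2}, \tfrac{1}{2})$, at the cost of an error controlled by the $p$-growth of $f$ on a thin annulus together with $\|\psi_n\|_{L^p}^p/N_n$. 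These $\tilde\psi_n$ are admissible competitors in the cell formula \eqref{f0hom} for $\bar f^0_{\rm hom}(s_0, \xi_0)$.

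The main obstacle is to reconcile the $\R^3$-valued competitors produced by the cut-off with the tangent-space-valued test-function constraint in the definition of $Tf^0_{\rm hom}$, and to pass from the original integrand $f$ to the perturbed one $\bar f$ in the blow-up limit. The first issue is handled by Proposition \ref{characterization}(i), which gives $Tf^0_{\rm hom}(s_0, \xi_0) = \bar f^0_{\rm hom}(s_0, \xi_0)$ and thereby legitimizes unconstrained test functions against the perturbed density. For the second issue, the manifold constraint is crucial: since $u_n(x) \in \mathcal{M}$ a.e.\ implies $\mathbf{P}_{u_n(x)}(\nabla u_n(x)) = \nabla u_n(x)$, the smoothness of $s \mapsto \mathbf{P}_s$ together with $u_n \to s_0$ in $L^p$ near $x_0$ gives
\[
|\nabla_{h_n} u_n - \mathbf{P}_{s_0}(\nabla_{h_n} u_n)| \le C\,|u_n - s_0|\,|\nabla_{h_n} u_n|.
\]
A truncation / equi-integrability argument for $|\nabla_{h_n} u_n|^p$ (\emph{\`a la} Fonseca--M\"uller--Pedregal), combined with a modulus of continuity for $f(y, \cdot)$ on bounded sets (as used in the $\Gamma$-limsup proof), makes the discrepancy $|f(\cdot, \nabla_{h_n} u_n) - \bar f(\cdot, s_0, \nabla_{h_n} u_n)|$ vanish asymptotically on the blow-up. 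Inserting $\tilde\psi_n$ in \eqref{f0hom}, letting $n \to \infty$ and then $\rho \to 0^+$, and invoking Proposition \ref{characterization}(i) one more time, yields the pointwise inequality $\tfrac{d\mu}{d\mathcal{L}^2}(x_0) \ge \bar f^0_{\rm hom}(s_0, \xi_0) = Tf^0_{\rm hom}(s_0, \xi_0)$, which integrates to the claim $I(u) \le I^0(u)$.
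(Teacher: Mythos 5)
Your overall architecture (energy measures, reduction to a pointwise bound on the Radon--Nikod\'ym derivative, blow-up at a point of approximate differentiability, passage to $\bar f$ and use of Proposition \ref{characterization}(i)) matches the paper, but the core blow-up step has a genuine gap. You rescale the in-plane variables at scale $h_n$ for a \emph{fixed} radius $\rho$ and introduce $\psi_n(z_\alpha,x_3)=\bigl(u_n(x_0+h_nz_\alpha,x_3)-s_0\bigr)/h_n-\xi_0 z_\alpha$, claiming that the Lebesgue-point property and $u_n\to u$ in $L^p$ force $\psi_n\to0$ in averaged $L^p$ on $(t_nQ')_{,1}$. This is not true: changing variables back,
\[
\frac{1}{t_n^2}\int_{(t_nQ')_{,1}}|\psi_n|^p\,dz=\frac{1}{\rho^2\,h_n^{p}}\int_{Q'(x_0,\rho)_{,1}}\bigl|u_n(x)-s_0-\xi_0(x_\alpha-x_0)\bigr|^p\,dx,
\]
and for fixed $\rho$ the integral on the right converges, as $n\to\infty$, to $\int_{Q'(x_0,\rho)_{,1}}|u-s_0-\xi_0(x_\alpha-x_0)|^p\,dx$, a fixed positive quantity which is small only in the limit $\rho\to0$ (approximate differentiability gives smallness relative to $\rho^p$, not to $h_n^p$), while you divide by $h_n^p\to0$. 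Since no rate of convergence of $u_n$ to $u$ relative to $h_n$ is available, this average blows up, so the De Giorgi slicing error (of the form $w^{-p}\int_{\rm strip}|\psi_n|^p$) cannot be made negligible and your cut-off competitors $\tilde\psi_n$ are not shown to be admissible at a controlled cost. This is precisely why the paper blows up at scale $\rho_k$, so that $v_{n,k}=(u_n(y_0+\rho_k y_\alpha,y_3)-s_0)/\rho_k$ converges to the affine map on the \emph{fixed} unit cube, and then extracts a diagonal subsequence with $h_k<\rho_k^2$: the localized functional becomes an unconstrained homogenization--dimension-reduction problem at the new fine scale $\delta_k=h_k/\rho_k\to0$ with affine limit datum, to which \cite[Theorem 4.2]{BFF} is applied after the decomposition theorem of \cite{BF}; no hand-made matching with the cell formula on expanding cubes is needed.

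Even if the slicing were repaired, two further points you rely on would need justification. First, equi-integrability of $|\nabla_{h_n}u_n|^p$ is not available for the original recovery sequence; it must be produced by the decomposition theorem adapted to the scaled gradients $\nabla_{h}$ (\cite{BF}, as in the paper), applied so that the modified sequence does not increase the limiting energy---one cannot simply truncate the original sequence without losing the lower bound. Second, the cell formulas \eqref{Tfhom} and \eqref{f0hom} are infima over $W^{1,\infty}$ test functions, while your competitors are only $W^{1,p}$, so an extra approximation step is required before they can be inserted. Your freezing of the manifold point via $|\nabla_{h_n}u_n-\mathbf P_{s_0}(\nabla_{h_n}u_n)|\le C|u_n-s_0|\,|\nabla_{h_n}u_n|$ together with truncation is in the right spirit (the paper's Step 3 accomplishes this through Scorza--Dragoni, a Chebyshev truncation and the Lipschitz estimate of \cite[Proposition 2.32]{D}), but for a Carath\'eodory integrand the modulus of continuity must be handled measurably in $x$, which your sketch only gestures at.
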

\begin{proof}
Before proving the result we recall that all the operations of sum and difference between the functions $u_n, v_{n,k}, v_k, w_k$ and $u$ must be intended in the sense of Remark \ref{abuso}.\\

\noindent
{\sc step 1.} Fix $u\in W^{1,p}(\omega;\mathcal{M})$. We consider a recovery sequence $(u_n)_n\subset W^{1,p}(\Omega; \mathcal{M})$ related to $I^0(u,\omega)$. We define the sequence of non-negative Radon measure
    \[
\mu_n:=\left(\int_{(-\frac{1}{2}, \frac{1}{2})}f\left(\frac{(\cdot)_\alpha}{h_n}, x_3, \nabla_{h_n}u_n\right)dx_3\right)\mathcal{L}^{2}\mres\omega.
    \]
    Up to a subsequence, there exists a Radon measure $\mu\in \mathcal{M}(\omega)$ such that $\mu_n\rightharpoonup^*\mu$ \\
    in $\mathcal{M}(\omega)$. 
    By Lebesgue Differentiation Theorem we can split $\mu$ into the sum of two mutually disjoint non-negative Radon measure $\mu^a$ and $\mu^s$. In particular, $\mu^a<<\mathcal{L}^{2}$, while $\mu^s$ is singular with respect to $\mathcal{L}^{2}$. By definition, $\mu^a(\Omega)\le\mu(\Omega)\le I^0(u)$ so we want to prove that
    \[
    \frac{d\mu}{d\mathcal{L}^{2}}(y_0)\ge Tf^0_{\rm hom}(u(y_0),\nabla_\alpha u(y_0)) \quad \text{for }\mathcal{L}^{2}-\text{ a.e. } y_0\in\omega.
    \]

    \noindent
    Let $y_0\in\omega$ be a Lebesgue point for $u$ and $\nabla_\alpha u$ and a point of of approximate differentiability for $u$, i.e. such that $u(y_0)\in \mathcal{M}$ and $\nabla_\alpha u(y_0)\in [T_{u(y_0)}(\mathcal{M})]^{2}$, and such that the Radon-Nykod\'ym derivative of $\mu$ with respect to $\mathcal{L}^{2}$ exists and it is finite. We define $s_0:=u(y_0)$ and $\xi_0:=\nabla_\alpha u(y_0)$ and we consider a vanishing sequence $(\rho_k)_k\subset (0,+\infty)$ such that $\mu(\partial Q' (y_0,\rho_k))=0$ for every $k\in\N$. By definition \eqref{perturbedf} of $\Bar{f}$ we get
    \begin{eqnarray*}
        \frac{d\mu}{d\mathcal{L}^{2}}(y_0)&=&\lim_{k\to +\infty}\frac{\mu(Q'(y_0,\rho_k))}{\rho_k^{2}}\nonumber\\
        &=&\lim_{k\to +\infty}\,\lim_{n\to +\infty} \frac{\mu_n(Q' (y_0,\rho_k)_{1})}{\rho_k^{2}}\nonumber\\
        &=&\lim_{k\to +\infty}\,\lim_{n\to +\infty}\int_{Q'_{,1}}f\left(\frac{y_0+\rho_ky_\alpha}{h_n},y_3, \nabla_{h_n}u_n(y_0+\rho_ky_\alpha, y_3)\right)dy\nonumber\\
        &=&\lim_{k\to +\infty}\,\lim_{n\to +\infty}\int_{Q'_{,1}} \Bar{f}\left(\frac{y_0+\rho_ky_\alpha}{h_n},y_3, u_n(y_0+\rho_ky_\alpha, y_3), \nabla_{h_n}u_n(y_0+\rho_ky_\alpha, y_3)\right)dy\nonumber\\
        &=&\lim_{k\to +\infty}\,\lim_{n\to +\infty}\int_{Q'_{,1}} \Bar{f}\left(\frac{y_0+\rho_ky_\alpha}{h_n},y_3, s_0+v_{n,k}(y), \nabla_{\frac{h_k}{\rho_k}}v_{n,k}(y)\right)dy,\nonumber
    \end{eqnarray*}
    with $v_{n,k}(y):=\frac{\left(u_n(y_0+\rho_ky_\alpha, y_3)-s_0\right)}{\rho_k}.$ 
\\
Since $y_0$ is a point of approximate differentiability  and $u_n\to u$ in $L^p(\Omega, \R^3)$ it follows that
\begin{equation*}
    \lim_{k\to \infty} \lim_{n\to \infty} \int_{Q'_{,1}} |v_{n,k}(y)-\xi_0y_\alpha|^pdy=\lim_{k\to \infty}\int_{Q'(y_0, \rho_k)}\frac{|u(y)-s_0-\xi_0(y_\alpha-y_0)|^p}{\rho^{2+p}}dy_\alpha=0.
\end{equation*}

Therefore it is possible to find a diagonal sequence $h_k := h_{n_k} < \rho_k^2$ such that, by setting $v_k(y) := v_{n_k, k} (y)$ with $y\in\Omega$, $v_0(y_\alpha) := \xi_0 y_\alpha$ with $y_\alpha\in\omega$, then $v_k \rightarrow v_0$ in $L^p(Q'_1; \mathbb{R}^{3})$ and
\begin{equation}
\label{(5.1)}
\frac{d \mu}{d \mathcal{L}^{2}}(y_0) = \lim_{k \rightarrow + \infty} \int_{Q'_{,1}} \Bar{f} \left (\frac{y_0 + \rho_k y_\alpha}{h_k},y_3, s_0 + \rho_k v_k(y), \nabla_{\frac{h_k}{\rho_k}} v_k(y) \right ) \, dy.
\end{equation}
At this point, we observe that $(\nabla_{h_k} v_k)_k$ is bounded in $L^p(Q'_{,1}; \mathbb{R}^{3\times3})$ thanks to the coercivity condition (H2). By using the Decomposition Theorem \cite[Theorem 1.1]{BF} (see also \cite{BZ}), it is possible to find a sequence $(\bar{v}_k)_k \subset W^{1, \infty}(Q'_{,1}; \mathbb{R}^{3})$ such that $\bar{v}_k = v_0$ on a neighborhood of $\partial (Q')_{,1},$ $\bar{v}_k \rightarrow v_0$ in $L^p(Q'_1; \mathbb{R}^{3}),$ the sequence of gradients $(|\nabla_{h_k} \bar{v}_k|^p)_k$ is equi-integrable, \color{black} and
\begin{eqnarray}
&& \lim_{k \rightarrow + \infty} \int_{Q'_{,1}} \Bar{f} \left (\frac{y_0 + \rho_k y_\alpha}{h_k},y_3, s_0 + \rho_k v_k(y), \nabla_{\frac{h_k}{\rho_k}}  v_k(y) \right ) \, dy \nonumber \\
&\ge& \, \limsup_{k \rightarrow + \infty} \int_{Q'_{,1}} \Bar{f} \left (\frac{y_0 + \rho_k y_\alpha}{h_k},y_3, s_0 + \rho_k v_k(y), \nabla_{\frac{h_k}{\rho_k}} \bar{v}_k(y) \right ) \, dy. \label{(5.2)}
\end{eqnarray}

\vspace{5mm}

\noindent{\sc step 2.} Let us set
\[
\frac{y_0}{h_k} = m_k + s_k \qquad \textnormal{with} \,\,\, m_k \in \mathbb{Z}^{2} \,\,\, \textnormal{and} \,\,\, s_k \in [0,1)^{2}.
\]
We can introduce
\[
x_k := \frac{h_k}{\rho_k} s_k \rightarrow 0 \qquad \textnormal{and} \qquad \delta_k := \frac{h_k}{\rho_k} \rightarrow 0.
\]
We can exploit the 1-periodicity of $\Bar{f}$ with respect to its first variable, \eqref{(5.1)} and \eqref{(5.2)} to get
\begin{eqnarray}
\frac{d \mu}{d \mathcal{L}^{2}}(x_0) &\ge & \, \limsup_{k \rightarrow + \infty} \int_{Q'_{,1}}\Bar{f} \left (\frac{x_k + y_\alpha}{\delta_k},y_3, s_0 + \rho_k v_k(y), \nabla_{\delta_k}  \bar{v}_k(y) \right ) \, dy \label{(5.3)} \\
&\ge &\limsup_{k \rightarrow + \infty} \int_{x_k +Q'_{,1}}\Bar{f} \left (\frac{y_\alpha}{\delta_k},y_3, s_0 + \rho_k v_k(y_\alpha - x_k, y_3), \nabla_{\delta_k} \bar{v}_k(y_\alpha - x_k, y_3) \right ) \, dy. \nonumber 
\end{eqnarray}
At this point, we extend $v_k$ as its limit (up to fixing $v_k$ at the boundary of $\partial \omega \times (-1,1)$) and $\bar{v}_k$ by $v_0$ to  $\mathbb{R}^{2}\times (-1,1).$ \color{black} As long as $x_k \rightarrow 0,$ we deduce that $\mathcal{L}^{3} ((Q'_{,1} - x_k) \Delta Q'_{,1}) \rightarrow 0,$ and the equi-integrability of $(|\nabla_{h_k} \bar{v}_k|^p)_k$ together with the $p$-growth condition (H2) implies
\begin{eqnarray*}
&& \int_{Q'_{,1} \Delta (x_k + Q'_{,1})} \Bar{f} \left (\frac{y_\alpha}{\delta_k},y_3, s_0 + \rho_k v_k(y_\alpha - x_k, y_3), \nabla_{\delta_k} \bar{v}_k(y_\alpha - x_k,y_3) \right ) \, dy \\
&\le & \, \beta' \int_{Q'_{,1} \Delta (Q'_{,1})-x_k} (1 + |\nabla_{\delta_k} \bar{v}_k|^p ) \, dy \rightarrow 0.
\end{eqnarray*}
Therefore \eqref{(5.3)} entails 
\[
\frac{d \mu}{d \mathcal{L}^{2}} (y_0) \ge \, \limsup_{k \rightarrow + \infty} \int_{Q'_{,1}} \Bar{f} \left (\frac{y_\alpha}{\delta_k},y_3, s_0 + \rho_k w_k, \nabla_{\delta_k} \bar{w}_k \right ) \, dy,
\]
where $w_k(y) := v_k (y_\alpha - x_k, y_3)$ and $\bar{w}_k(y) := \bar{v}_k(y_\alpha - x_k, y_3)$ converge to $v_0$ in $L^p(Q'_{,1}; \mathbb{R}^{3}),$ and $(|\nabla \bar{w}_k|^p)_k$ is equi-integrable as well.
\vspace{5mm}

\noindent{\sc step 3.} 
Fixed $M>0$, we denote by $E_{M,k}$ the set
\[
E_{M,k}:=\left\{ x\in Q'_{,1}: |\nabla_{h_k} w_k|\le M \right\}.
\]
By Chebyschev inequality, we have that $\mathcal{L}^3(Q'_1\setminus E_{M,k})\le \frac{C}{M^p}$ for some constant $C>0$. By Scorza-Dragoni Theorem, fixed $\eta>0$ there exists a compact set $K_\eta \subset \overline{Q'_{,1}}$ such that $\mathcal{L}^3(\overline{Q'_{,1}}\setminus K_\eta)\le \eta$ and such that $f:K_\eta\times\R^{3\times 3}\to \R$ is continuous. It follows that $\Bar{f}(\cdot, s, \cdot):K_\eta\times B^{3\times 3}(0,M)\to\R$ is uniformly continuous for every $s\in\R^3$. Moreover, the function
\[
\Psi_{\eta, M}(t):=\sup \left\{ |f(x,\xi)-f(x, \zeta)|: x\in K_\eta,\, \xi,\zeta\in B^{3\times 3}(0,M), |\xi-\zeta|\le t \right\},
\]
is continuous, takes value $0$ for $t=0$ and is bounded. By definition of $\Psi_{\eta, M}$ and $\mathbf P_{s}$ and \cite[Proposition 2.32]{D} it follows that for every $x\in K_\eta$, $\xi\in B^{3\times 3}(0,M)$ and $s_1, s_2\in\R^3$ holds
\begin{align}
    |\Bar{f}(x,s_1, \xi)-\Bar{f}(x,s_2, \xi)|&\le \Psi_{\eta,M}(|\mathbf P_{s_1}(\xi)-\mathbf P_{s_2}(\xi)|) + C_M|\mathbf P_{s_1}(\xi)-\mathbf P_{s_2}(\xi)| \nonumber\\
    & \le \Psi_{\eta,M}(M|\mathbf P_{s_1}- \mathbf P{s_2}|) + C_M|\mathbf P_{s_1}- \mathbf P_{s_2}|\nonumber\\
    &:=\tilde\Psi(|\mathbf P_{s_1}- \mathbf P_{s_2}|), \nonumber
\end{align}
where $|\mathbf P_{s_1}- \mathbf P{s_2}|$ denotes the operator norm of $\mathbf P_{s_1}- \mathbf P_{s_2}$. By the previous inequality, it follows that if we denote
\[
K_\eta^{per}:= \bigcup_{j\in\Z}(j+K_\eta),
\]
then
\[
 |\Bar{f}(x,s_1, \xi)-\Bar{f}(x,s_2, \xi)|\le\tilde\Psi(|\mathbf P_{s_1}- \mathbf P{s_2}|), \nonumber
\]
for every  $x\in K_\eta^{per}$, $\xi\in B^{3\times 3}(0,M)$ and $s_1, s_2\in\R^3$.
From the previous inequality it follows that
\begin{align}
    \frac{d \mu}{d \mathcal{L}^{2}} (y_0)&\ge \limsup_{k \rightarrow + \infty} I^{h_k}(\bar{w}_k, Q'_1)\nonumber\\
    & \ge \limsup_{k \rightarrow + \infty} \int_{E_{M,k}\cap (\delta_k K_\eta^{per})}  \Bar{f}\left (\frac{y_\alpha}{\delta_k},y_3, s_0, \nabla_{\delta_k} \bar{w}_{k} \right ) dy\nonumber\\
    & - \limsup_{k \rightarrow + \infty} C_M\int_{Q'_{,1}} \tilde\Psi_{\eta, M}(|\mathbf P_{s_0+\rho_k w_k(y)}- \mathbf P_{s_0}|) dy. \nonumber
\end{align}
Since $\tilde\Psi_{\eta, M}$ is continuous, bounded and $\tilde\Psi_{\eta, M}(0)=0$ and since $|\mathbf P_{s_0+\rho_k w_k(y)}- \mathbf P_{s_0}| \to 0$ as $k\to \infty$, then the last term in the previous inequality is also $0$. It follows that
\[
 \frac{d \mu}{d \mathcal{L}^{2}} (y_0) \ge 
 \limsup_{k \rightarrow +\infty} \int_{E_{M,k}\cap \delta_k K_\eta^{per}} \bar f\left(\frac{y_\alpha}{\delta_k}, y_3,s_0,\nabla_{\delta_k} \bar w_{k} \right )dy.
\]
From the $p$-growth of $\bar f$ and from Riemann-Lebesgue Lemma we get that
\begin{align}
    &\limsup_{k \rightarrow + \infty} \int_{E_{M,k}\setminus \delta_k K_\eta^{per}}  \bar f\left(\frac{y_\alpha}{\delta_k},y_3, s_0, \nabla_{\delta_k} \bar w_{k} \right)dy\nonumber\\
    & \le \limsup_{k \rightarrow + \infty} C(1+M^p)\mathcal{L}^3(Q'_1\setminus \delta_k K_\eta^{per})\nonumber\\
    & = C(1+M^p)\mathcal{L}^3(Q'_1\setminus K_\eta)\nonumber\\
    & \le C(1+M^p)\eta.\nonumber
\end{align}
From the previous inequality we deduce that
\[
\frac{d \mu}{d \mathcal{L}^{2}} (y_0) \ge \limsup_{k \rightarrow + \infty} \int_{E_{M,k}}  \Bar{f}\left (\frac{y_\alpha}{\delta_k},y_3, s_0, \nabla_{\delta_k} \bar w_{k} \right )dy - C(1+M^p)\eta.
\]
Since $\eta$ is arbitrary, then for $\eta \to 0$ we get
\begin{align}
    \label{lowerboundM}
    \frac{d \mu}{d \mathcal{L}^{2}} (y_0) \ge \limsup_{k \rightarrow + \infty} \int_{E_{M,k}}  \Bar{f}\left (\frac{y_\alpha}{\delta_k},y_3, s_0, \nabla_{\delta_k} \bar w_{k} \right )dy.
\end{align}
On the other hand, by construction, $\mathcal{L}^3(Q'_{,1}\setminus E_{M,k})\to 0$ uniformly with respect to $k$ as $M\to +\infty$. Since $(|\nabla_{h_k}\bar w_k|^p)_k$ is equi-integrable in $\Omega$, then from the $p$-growth of $\bar f$ we get for $M \to +\infty$
\[
\sup_{k}\int_{Q'_{,1}\setminus E_{M,k}} \bar f\left(\frac{y_\alpha}{\delta_k},y_3, s_0,\nabla_{\delta_k}\bar w_k\right)dy \le \sup_{k} C \int_{Q'_{,1}\setminus E_{M,k}} \left( 1+ |\nabla_{\delta_k}\bar w_k|^p\right)dy \to 0. 
\]
From this limit and from \eqref{lowerboundM} we conclude that
\[
\frac{d \mu}{d \mathcal{L}^{2}} (y_0) \ge \limsup_{k \rightarrow + \infty} \int_{Q'_{,1}}  \bar f\left (\frac{y_\alpha}{\delta_k},y_3, s_0, \nabla_{\delta_k} \bar w_{k} \right )dy.
\]
Using now \cite[Theorem 4.2]{BFF} we get
\[
\frac{d \mu}{d \mathcal{L}^{2}} (y_0)\ge \int_{Q'} \bar f_{\rm hom}^0(u(y_0), \nabla_\alpha u(y_0))dy= \bar f_{\rm hom}^0(u(y_0), \nabla_\alpha u(y_0)).
\]
By Proposition \ref{characterization} it follows that
\[
\bar f_{\rm hom}^0(u(y_0), \nabla_\alpha u(y_0))=Tf_{\rm hom}^0(u(y_0), \nabla_\alpha u(y_0)).
\]

\end{proof}

\section*{Acknowledgment}
The authors gratefully acknowledge support from INdAM GNAMPA.
	A.T.\,have been partially supported through the INdAM-GNAMPA 2025 project ``Minimal surfaces: the Plateau problem and behind'' cup E5324001950001. The research was in part carried out within the project: Geometric-Analytic Methods for PDEs and Applications (GAMPA) , ref. 2022SLTHCE – cup E53D2300588 0006 - funded by European Union - Next Generation EU within the PRIN 2022 program (D.D. 104 - 02/02/2022 Ministero dell’Università e della Ricerca). This manuscript reflects only the authors’ views and opinions and the Ministry cannot be considered responsible for them. 
    E. Z. has been supported by PRIN 2022: Mathematical Modelling of Heterogeneous Systems (MMHS)
- Next Generation EU CUP B53D23009360006 and by INdAM GNAMPA Project 2024 'Composite materials and microstructures'.

\end{document}